\numberwithin{equation}{section}
\newcommand{\lam}{\lambda}
\newcommand{\e}{\epsilon}
\newcommand{\p}{\partial}
\begin{document}
\ensubject{fdsfd}%¶þŒ¶Ñ§¿Æ

%%%%%%%%%%%%%%%%%%%%%%%%%%%%%%%%%%%%%%%%%%%%%%%%%%%%%%%
%%% Authors do not modify the information below
%%% ×÷Õß²»ÐèÒªÐÞžÄŽËŽŠÐÅÏ¢
%%% ÓÐ×šÌâÃû³ÆÊ±, œ«µÚÒ»ÐÐµÄ{}×¢ÊÍµô, Ê¹ÓÃµÚ¶þÐÐ
\ArticleType{ARTICLES}%ÀžÄ¿
\SpecialTopic{Special Issue on Differential Geometry}%×šÌâ
\SubTitle{In Memory of Professor Zhengguo Bai {\rm(1916}-{\rm2015)}}%×š¿¯ËµÃ÷
\Year{2021}
\Month{February}%
\Vol{60}
\No{1}
\BeginPage{1} %
\DOI{10.1007/s11425-000-0000-0}
\ReceiveDate{October 14, 2020}
\AcceptDate{February 3, 2021}

\title[]{Nonexistence of NNSC-cobordism of Bartnik data}{Nonexistence of NNSC-cobordism of Bartnik data}

\author[1]{Leyang Bo}{windrunnerbly@pku.edu.cn}
\author[2]{Yuguang Shi}{ygshi@math.pku.edu.cn}

\AuthorMark{Leyang Bo,Yuguang Shi}

%%% Authors for citation. Ê×Ò³ÒýÓÃÖÐµÄ×÷ÕßÐÅÏ¢
%%% ÈôŽËŽŠÖž¶šÒÔŽËŽŠÎª×Œ, ·ñÔòÖ±œÓµ÷ÓÃauthorÐÅÏ¢
\AuthorCitation{Leyang Bo, Yuguang Shi}

\address[1]{Key Laboratory of Pure and Applied Mathematics, School of Mathematical Sciences, \\
Peking University, Beijing, 100871, P.\ R.\ China;}
\address[2]{Key Laboratory of Pure and Applied Mathematics, School of Mathematical Sciences,\\
 Peking University, Beijing, 100871, P.\ R.\ China;}

\abstract{
 In this paper, we consider the problem of nonnegative scalar curvature (NNSC) cobordism of Bartnik data $(\Sigma_1^{n-1}, \gamma_1, H_1)$ and $(\Sigma_2^{n-1}, \gamma_2, H_2)$. We prove that given two metrics $\gamma_1$ and $\gamma_2$ on $S^{n-1}$ ($3\le n\le 7$) with $H_1$ fixed, then $(S^{n-1}, \gamma_1, H_1)$ and $(S^{n-1}, \gamma_2, H_2)$ admit no NNSC cobordism  provided the prescribed mean curvature $H_2$ is large enough(Theorem \ref{highdimnoncob0}). Moreover, we show that for $n=3$, a much weaker condition that the total mean curvature $\int_{S^2}H_2d\mu_{\gamma_2}$ is large enough rules out NNSC cobordisms(Theorem \ref{2-d0}); if we require the Gaussian curvature of $\gamma_2$ to be positive, we get a criterion for non existence of trivial NNSC-cobordism by using Hawking mass and Brown-York mass(Theorem \ref{cobordism20}). For the general topology case, we prove that $(\Sigma_1^{n-1}, \gamma_1, 0)$ and $(\Sigma_2^{n-1}, \gamma_2, H_2)$ admit no NNSC cobordism provided the prescribed mean curvature $H_2$ is large enough(Theorem \ref{highdimnoncob10}).}

%  Keyword is required.
 \keywords{cobordism, scalar curvature, mean curvature}

 \MSC{53C20, 83C99}

\maketitle

\section{Introduction}

Generalized Bartnik data $(\Sigma^{n-1}, \gamma, H)$ consists of an $(n-1)$-dimensional orientable  Riemannian manifold $(\Sigma^{n-1}, \gamma)$ and a smooth function $H$ defined on $\Sigma^{n-1}$ which  serves as the mean curvature of $\Sigma^{n-1}$. One natural question is to study nonnegative scalar curvature  (NNSC)-cobordism of Bartnik data $(\Sigma_i^{n-1}, \gamma_i, H_i)$, $i=1,2$.(cf \cite{HS}). Namely, {\it for  Bartnik data $(\Sigma^{n-1}_i, \gamma_i, H_i)$, $i=1,2$, we say $(\Sigma^{n-1}_1, \gamma_1, H_1)$ is NNSC-cobordant to $(\Sigma^{n-1}_2, \gamma_2, H_2)$ if there is an orientable $n$-dimensional manifold $(\Omega^n,g)$ with $\partial \Omega^n=\Sigma_1^{n-1} \cup \Sigma^{n-1}_2$,  and with the scalar curvature $R(g)\geq 0$, $\gamma_i=g|_{\Sigma_i}$, $i=1,2$, $H_1$ is the mean curvature of $\Sigma^{n-1}_1$ in $(\Omega^n,g)$ with respect to inward unit normal vector, and $H_2$ is the mean curvature of $\Sigma^{n-1}_2$ in $(\Omega^n,g)$ with respect to outward unit normal vector, we say it is a trivial NNSC-cobordism if each $\Sigma_i^{n-1}$, $i=1,2$, is differmorphic to $\Sigma^{n-1}$ and $\Omega^n$ is differmorphic to $\Sigma^{n-1}\times [0,1]$.} By the obvious way, we can define $(\Sigma_i^{n-1}, \gamma_i, H_i)$, $i=1,2$, to be cobordant with scalar curvature with a lower bound.

\begin{tikzpicture}

\draw[dashed] (2,1) arc (0:180:2cm and 0.8cm);
\draw (2,1) arc (0:-180:2cm and 0.8cm);
\node[below] at (0,1) {$(\Sigma^{n-1}_1, \gamma_1, H_1)$};
\draw (0,6) ellipse (3cm and 1.2cm)node[above]{$(\Sigma^{n-1}_2, \gamma_2, H_2)$};
\coordinate (A) at (-3,6);
\coordinate (B) at (3,6);
\coordinate (C) at (-2,1);
\coordinate (D) at (2,1);
\coordinate (E) at (-2.2,1);
\coordinate (F) at (-2.3,1.5);
\draw (C)--(A);
\draw (D)--(B);
\draw[->] (E)--node[left]{normal vector $\nu$}(F);
\node[below] at (0,3) {$(\Omega^n, g)$};
\node[below] at (0,-0.3) {\textbf{Figure 1}};
\node[below] at (0,-0.8) {$(\Omega^n, g)$ is a cobordism of {$(\Sigma^{n-1}_1, \gamma_1, H_1)$} and {$(\Sigma^{n-1}_2, \gamma_2, H_2)$}.};
\end{tikzpicture}

   NNSC-cobordism of Bartnik data may have relation with positive scalar curvature (PSC) concordant relation for two PSC-metrics on a manifold. Indeed, PSC-concordance is a special case of  trivial NNSC-cobordism of Bartnik data. For the definition and deep discussion on this topic  from topological  point of view,  please see \cite{Wa2, Wa3} and references therein. One basic problem in Riemannian Geometry  is to study NNSC fill-ins of Bartnik data $(\Sigma^{n-1}, \gamma, H)$, i.e. {\it under what  conditions is it that  $\gamma$ is  induced  by a Riemannian metric $g$ with  nonnegative  scalar curvature, for example, defined on $\Omega^n$, and $H$ is the mean curvature of $\Sigma^{n-1}$ in $(\Omega^n, g)$ with respect to the outward unit normal vector? } Indeed, this problem has been proposed by M. Gromov recently (see Problem A in \cite{Gromov2}, and  section 3.3 and  section 3.6 in \cite{Gromov4}). We note that, according to the definition of NNSC-cobordism, if $(\Sigma_1^{n-1}, \gamma_1, H_1)$ admits a PSC fill-in and $(\Sigma^{n-1}_1, \gamma_1, H_1)$ is NNSC-cobordant to $(\Sigma^{n-1}_2, \gamma_2, H_2)$ then $(\Sigma^{n-1}_2, \gamma_2, H_2)$ also  admits a PSC fill-in. Hence, we see that  NNSC-cobordism also has close relationship with NNSC fill-ins problem. Unfortunately, to our knowledge, there is no very effective criteria for NNSC-cobordism of Bartnik data. In this paper, we will present some results in this direction. By using  Hawking mass and Brown-York mass, we are able to  to give a criterion for non existence of  trivial NNSC-cobordism  when $n=3$.  For the definition of Hawking mass and Brown-York mass, see Definition \ref{hawkingmass} and Definition \ref{bymass} respectively. Namely, we have

\begin{theorem}\label{cobordism20}\quad
Suppose Bartnik data $(S^2, \gamma_i, H_i)$ satisfies $H_i>0$, $i=1,2$, and the Gaussian curvature of $(S^2, \gamma_2)$ is positive, then they admit no trivial NNSC cobordism provided
\begin{equation}\label{qmass1}
m_{BY}(S^2, \gamma_2, H_2)<0\le m_H(S^2, \gamma_1, H_1).	
\end{equation}

\end{theorem}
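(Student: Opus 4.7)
Argue by contradiction: assume a trivial NNSC cobordism $(\Omega^3,g)$ exists, so $\Omega\cong S^2\times[0,1]$ with $\p\Omega=\Sigma_1\sqcup\Sigma_2$ and $R(g)\ge 0$. The plan is to extend $(\Omega,g)$ past $\Sigma_2$ by a Shi--Tam asymptotically flat extension, obtaining an AF $3$-manifold with distributional NNSC whose ADM mass equals $m_{BY}(S^2,\gamma_2,H_2)<0$, and then to force the Hawking mass of $\Sigma_1$ to lie below this ADM mass by running Huisken--Ilmanen inverse mean curvature flow from $\Sigma_1$.

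\textbf{Shi--Tam extension and gluing.} The positive Gaussian curvature of $(S^2,\gamma_2)$ gives, by the Weyl--Nirenberg theorem, a unique isometric embedding into $\R^3$, and together with $H_2>0$ this is exactly the data for Shi and Tam's construction. That construction yields an asymptotically flat $(E,g_E)$ with $R(g_E)=0$, whose inner boundary is isometric to $(S^2,\gamma_2)$ with prescribed mean curvature $H_2$ (with respect to the normal pointing into the asymptotic end), and whose ADM mass equals $m_{BY}(S^2,\gamma_2,H_2)$. Glue $(\Omega,g)$ to $(E,g_E)$ along $\Sigma_2$; the induced metrics coincide, and both mean curvatures of $\Sigma_2$, measured with the common normal pointing from $\Omega$ into $E$, equal $H_2$. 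By Miao's corner criterion, the resulting Lipschitz AF manifold $(\tilde M,\tilde g)$ has distributionally nonnegative scalar curvature, one asymptotic end of ADM mass $m_{BY}(S^2,\gamma_2,H_2)<0$, and inner boundary $(\Sigma_1,\gamma_1)$ with $H_1>0$.

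\textbf{Inverse mean curvature flow and contradiction.} On $(\tilde M,\tilde g)$, mollified across the corner in a Miao-type manner that preserves NNSC and fixes the data at $\Sigma_1$ and at infinity, run weak IMCF $\{\Sigma_t\}$ starting from $\Sigma_1$. Since $H_1>0$, a short classical segment initiates the flow, after which the weak formulation continues. Geroch monotonicity combined with the Huisken--Ilmanen asymptotic bound gives
\begin{equation*}
0\le m_H(S^2,\gamma_1,H_1)\le \lim_{t\to\infty} m_H(\Sigma_t)\le m_{ADM}(\tilde M)=m_{BY}(S^2,\gamma_2,H_2)<0,
\end{equation*}
which is absurd.

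\textbf{Main obstacle.} The chief technical issue is the Lipschitz corner along $\Sigma_2$: one must justify that Geroch monotonicity and the asymptotic $m_H\to m_{ADM}$ bound survive on $(\tilde M,\tilde g)$, either by explicit mollification that preserves all relevant data and passing to the limit, or by appealing to a version of weak IMCF adapted to distributionally NNSC metrics. Guarding against any loss of monotonicity at the initial transition from smooth to weak flow---in particular, the possible jump to an outer-minimizing hull of $\Sigma_1$---and checking that the identity $m_{ADM}(\tilde M)=m_{BY}(S^2,\gamma_2,H_2)$ is preserved under the smoothing are the secondary technical points.
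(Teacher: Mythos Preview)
Your strategy matches the paper's: both argue by contradiction, glue the assumed cobordism to a Shi--Tam asymptotically flat extension at $\Sigma_2$, smooth the corner \`a la Miao, and derive a contradiction from inverse mean curvature flow started at $\Sigma_1$ (the paper packages the IMCF step as a separate lemma comparing $m_{ADM}$ to $m_H(\partial M)$, handling the outer-minimizing hull exactly as you anticipate). Two points of precision are worth noting. First, the ADM mass of the Shi--Tam extension is not \emph{equal} to $m_{BY}(S^2,\gamma_2,H_2)$: Shi--Tam monotonicity gives only $m_{ADM}(E)\le m_{BY}$, with equality just in the Euclidean case; this is harmless for your argument since the inequality still forces $m_{ADM}<0$. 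Second, your phrase ``fixes the data at $\Sigma_1$'' is slightly off: Miao's local mollification near $\Sigma_2$ leaves $\Sigma_1$ alone, but restoring $R\ge 0$ requires a global conformal factor $u_\delta$ that does perturb the mean curvature of $\Sigma_1$; the paper closes this gap by showing $u_\delta\to 1$ in $C^1$ as $\delta\to 0$, so $m_H(\Sigma_1,\gamma_1,\tilde H_1)\ge -C\varepsilon$ and $m_{ADM}(\tilde g_\delta)\to m_{ADM}(g)<0$, yielding the contradiction for $\delta$ small. Modulo these refinements your outline is correct and essentially identical to the paper's proof.
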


Note that assumption (\ref{qmass1}) means that the total mean curvature of Bartnik data $(S^2, \gamma_2, H_2)$ is not too small, and we find that if $(S^2, \gamma_1, H_1)$ is  NNSC-cobordant to  $(S^2, \gamma_2, H_2)$, this total mean curvature can't be too large. More precisely, we have

\begin{theorem}\label{2-d0}\quad
Suppose Bartnik data $(S^2, \gamma_i, H_i)$ satisfies $H_i>0, i=1,2$. Then there is a  positive constant $\Lambda(\gamma_1,\gamma_2,H_1)$  depending only on $\gamma_1,\gamma_2,H_1$ such that if $(S^2, \gamma_1, H_1)$ is NNSC-cobordant to  $(S^2, \gamma_2, H_2)$, then
$$\int_{S^2} H_2 d\mu_{\gamma_2}\le \Lambda.$$
\end{theorem}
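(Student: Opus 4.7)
The plan is to bound the Hawking mass $m_{H}(S^{2},\gamma_{2},H_{2})$ from below by a constant depending only on $(\gamma_{1},H_{1})$, and then to convert this into the desired bound on $\int_{S^{2}}H_{2}\,d\mu_{\gamma_{2}}$ by Cauchy-Schwarz. Indeed, from the definition of the Hawking mass,
\begin{equation*}
m_{H}(S^{2},\gamma_{2},H_{2})=\sqrt{\tfrac{|S^{2}|_{\gamma_{2}}}{16\pi}}\Bigl(1-\tfrac{1}{16\pi}\int_{S^{2}}H_{2}^{2}\,d\mu_{\gamma_{2}}\Bigr),
\end{equation*}
a lower bound $m_{H}(\Sigma_{2})\ge -C(\gamma_{1},H_{1})$ yields an explicit upper bound on $\int H_{2}^{2}\,d\mu_{\gamma_{2}}$ in terms of $C$ and $|S^{2}|_{\gamma_{2}}$; Cauchy-Schwarz, $(\int H_{2}\,d\mu_{\gamma_{2}})^{2}\le |S^{2}|_{\gamma_{2}}\int H_{2}^{2}\,d\mu_{\gamma_{2}}$, then produces the desired $\Lambda(\gamma_{1},\gamma_{2},H_{1})$.

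To obtain the Hawking-mass lower bound I would first extend $(\Omega,g)$ past $\Sigma_{1}$ by a fixed cap $(M_{1},g_{1})$ that depends only on $(\gamma_{1},H_{1})$: a Mantoulidis-Schoen-type collar attached to a Schwarzschild-like piece produces a compact $R\ge 0$ manifold with outer Bartnik data $(\gamma_{1},H_{1})$ and an inner outermost minimal $2$-sphere $S$ of prescribed area $A_{0}=A_{0}(\gamma_{1},H_{1})$. Gluing $M_{1}$ to $\Omega$ along $\Sigma_{1}$ (with the mean-curvature normals oriented so that Miao's jump condition for distributional $R\ge 0$ is satisfied) and applying Miao-Shi-Tam corner smoothing yields a Riemannian manifold $(\widetilde\Omega,\tilde g)$ with $R(\tilde g)\ge 0$, containing an inner minimal sphere $S$ and with outer boundary $\Sigma_{2}$ along which $\tilde g$ agrees with $g$; in particular, the Hawking mass of the outer boundary is still $m_{H}(S^{2},\gamma_{2},H_{2})$.

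I would then run Huisken-Ilmanen's weak inverse mean curvature flow starting from $S$ in $\widetilde\Omega$, further extending past $\Sigma_{2}$ by a second arbitrary $R\ge 0$ collar so that the flow takes place in a complete, asymptotically flat ambient. Geroch's monotonicity gives
\begin{equation*}
m_{H}(\Sigma_{t})\ge m_{H}(S)=\sqrt{A_{0}/(16\pi)}
\end{equation*}
for every weak flow time $t$, and a limiting argument as the flow crosses $\Sigma_{2}$ yields $m_{H}(S^{2},\gamma_{2},H_{2})\ge\sqrt{A_{0}/(16\pi)}$. Together with the first paragraph this proves Theorem \ref{2-d0}.

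\textbf{Main obstacle.} The technical heart of the argument is the weak IMCF step: one must ensure that the flow, which may jump across outer-minimizing hulls, does not lose Hawking mass by the time it reaches $\Sigma_{2}$, and that the auxiliary extension past $\Sigma_{2}$ can be chosen so that $\Sigma_{2}$ itself sits as a level set (or a limit of level sets) of the weak IMCF potential without degeneration. If this route turns out to be too delicate, a natural alternative is a spacetime-harmonic-function argument of Bray-Kazaras-Khuri-Stern type applied directly on $\widetilde\Omega$, which gives a Hawking-mass comparison between the two boundary components of any $R\ge 0$ cobordism and bypasses inverse mean curvature flow altogether.
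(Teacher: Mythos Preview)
Your plan has a genuine gap at the very first step: the compact $R\ge 0$ cap $M_{1}$ with outer Bartnik data $(\gamma_{1},H_{1})$ and an inner minimal sphere need not exist. Take $(\gamma_{1},H_{1})$ to be the round sphere of radius $r_{1}$ with $H_{1}=2/r_{1}$. If such an $M_{1}$ existed, gluing it to $\mathbb{R}^{3}\setminus B_{r_{1}}$ along the matching boundary would yield (after Miao smoothing) an asymptotically flat $3$-manifold with $R\ge 0$, zero ADM mass, and an inner minimal sphere---contradicting the Riemannian Penrose inequality. The Mantoulidis--Schoen collar only lets you prescribe the metric at the \emph{minimal} end; the mean curvature at the other end is forced to be sub-Euclidean, so you cannot hit an arbitrary $H_{1}>0$. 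More decisively, the Hawking-mass lower bound you are aiming for, $m_{H}(S^{2},\gamma_{2},H_{2})\ge -C(\gamma_{1},H_{1})$ with $C$ \emph{independent of} $\gamma_{2}$, is outright false: in $\mathbb{R}^{3}$ take $\Sigma_{1}$ the unit sphere ($H_{1}=2$) and $\Sigma_{2}$ an elongated convex ellipsoid containing it; the Euclidean annulus is an NNSC cobordism, yet $m_{H}(\Sigma_{2})\to -\infty$ as the ellipsoid degenerates. Since your Cauchy--Schwarz step needs exactly this $\gamma_{2}$-independent bound, the route collapses, and the Bray--Kazaras--Khuri--Stern alternative faces the same obstruction.

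The paper's argument is quite different and sidesteps both issues by abandoning $R\ge 0$ in the fill-in. It first uses Proposition~\ref{extend} to extend the cobordism on the $\Sigma_{1}$ side so that $\gamma_{1}$ becomes round and $H_{1}$ constant, at the price of a possibly \emph{negative} scalar-curvature lower bound $R\ge -C(\gamma_{1},\gamma_{2})$. It then fills the round inner boundary by a geodesic ball in a hyperbolic space $\mathbb{H}^{3}_{-\kappa_{1}^{2}}$ whose curvature is chosen so that the boundary mean curvatures match; after smoothing, one has a fill-in of $(S^{2},\gamma_{2},H_{2})$ with $R\ge -6\kappa^{2}$ for a $\kappa=\kappa(\gamma_{1},\gamma_{2},H_{1})$ also large enough that $K_{\gamma_{2}}>-\kappa^{2}$. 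The hyperbolic Shi--Tam inequality \cite{ST2} then gives $\int_{S^{2}}(H_{0}-H_{2})\cosh(\kappa r)\,d\mu_{\gamma_{2}}\ge 0$, where $H_{0}$ is the mean curvature of the isometric image of $(S^{2},\gamma_{2})$ in $\mathbb{H}^{3}_{-\kappa^{2}}$, and hence $\int_{S^{2}}H_{2}\,d\mu_{\gamma_{2}}\le\int_{S^{2}}H_{0}\cosh(\kappa r)\,d\mu_{\gamma_{2}}=:\Lambda$. The point is that a fill-in with a controlled negative scalar-curvature lower bound \emph{always} exists, and the hyperbolic quasi-local mass controls $\int H_{2}$ directly, with no detour through $\int H_{2}^{2}$ or the Hawking mass.
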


For higher dimensional case, we have
\begin{theorem}\label{highdimnoncob0}\quad
Suppose Bartnik data $(S^{n-1}, \gamma_i, H_i)$ satisfies $H_i>0$, $i=1,2$, $3\leq n \leq 7$. Then there exists  a positive constant $\Lambda(n,\gamma_1,\gamma_2,H_1)$ depending only on $n,\gamma_1,\gamma_2, H_1$, such that for any $H_2\ge \Lambda$,  $(S^{n-1}, \gamma_1, H_1)$ is not NNSC-cobordant to $(S^{n-1}, \gamma_2, H_2)$.
\end{theorem}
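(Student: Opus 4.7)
I would argue by contradiction, assuming that an NNSC cobordism $(\Omega^n,g)$ exists for arbitrarily large $H_2$, and derive a contradiction by converting $(\Omega,g)$ into a fill-in of $(S^{n-1},\gamma_2,H_2)$ and then invoking a higher-dimensional obstruction to such fill-ins valid for $3\le n\le 7$.

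First, for the fixed data $(\gamma_1,H_1)$ I would construct an auxiliary compact NNSC manifold $(M_1,g_1)$ with $\partial M_1=S^{n-1}$, induced boundary metric $\gamma_1$, and boundary mean curvature $\tilde H_1$ compatible with Miao's corner-smoothing condition against $H_1$. A natural candidate is a short warped cylinder $[0,\delta]\times S^{n-1}$ with metric $dt^2+\phi(t)^2\gamma_1$: tuning $\phi'(0)$ prescribes $\tilde H_1$, and $\phi$ can be adjusted so that the warped product has nonnegative scalar curvature (possibly after a preliminary conformal change to raise the scalar curvature of $\gamma_1$). Gluing $(M_1,g_1)$ to $(\Omega,g)$ along $\Sigma_1$ and invoking Miao's smoothing then produces a smooth NNSC fill-in $(N,g_N)$ of $(S^{n-1},\gamma_2,H_2-\varepsilon)$ for $\varepsilon>0$ arbitrarily small, whose ambient geometry is controlled by $\gamma_1$, $H_1$, and $\gamma_2$.

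Next I would apply a Shi--Tam-type higher-dimensional obstruction: there should be a constant $\Lambda_0=\Lambda_0(n,\gamma_1,\gamma_2,H_1)$ such that no NNSC fill-in of $(S^{n-1},\gamma_2,H_2)$ can exist once $H_2-\varepsilon>\Lambda_0$. This is where $3\le n\le 7$ enters crucially: the obstruction is proved via a $\mu$-bubble (a minimizer of a volume-weighted area functional) inside $N$, whose regularity theory requires $n\le 7$. Combining the stability inequality for the $\mu$-bubble minimizer with $R(g_N)\ge 0$ and a prescribing function tailored so that $\Sigma_2$ and the gluing surface act as barriers yields the upper bound $H_2\le \Lambda_0$. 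Taking $\Lambda$ strictly larger than $\Lambda_0$ then delivers the contradiction.

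\textbf{Main obstacle.} The hardest step will be the higher-dimensional fill-in obstruction. In dimension three, the Hawking and Brown--York masses give a direct monotone quantity controlling $H_2$ (as exploited in Theorems \ref{2-d0} and \ref{cobordism20}); no such monotonicity is available for $4\le n\le 7$, and one must engineer a $\mu$-bubble whose prescribing function is tuned to the boundary data and whose stability can be converted into a sharp mean-curvature bound. A secondary subtlety is keeping the sign and barrier conventions consistent across the construction of the auxiliary cap $M_1$, Miao's smoothing at the corner on $\Sigma_1$, and the $\mu$-bubble on $N$, so that the bound extracted really depends only on $n,\gamma_1,\gamma_2,H_1$.
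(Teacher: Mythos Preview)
Your overall strategy---cap off the inner boundary, smooth the corner, and invoke a fill-in obstruction---is reasonable in spirit, but the cap construction as written does not work, and this is not a minor technicality.

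A ``short warped cylinder $[0,\delta]\times S^{n-1}$'' has \emph{two} boundary components, so it is not a cap; gluing it on leaves you with another cobordism, not a fill-in. If you instead try to produce a genuine NNSC ball with boundary $(S^{n-1},\gamma_1)$ and outward mean curvature at least $H_1$ (which is what Miao's corner condition requires), you are asking precisely for an NNSC fill-in of the Bartnik data $(S^{n-1},\gamma_1,H_1)$---and no such fill-in need exist for arbitrary $(\gamma_1,H_1)$. Your suggested fix, ``a preliminary conformal change to raise the scalar curvature of $\gamma_1$,'' changes the boundary metric and so breaks the gluing. In short, the step you treat as routine is exactly the obstacle.

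The paper sidesteps this by giving up on NNSC for the cap. Via Proposition~\ref{extend} it first extends the cobordism so that both boundary spheres are round with constant mean curvature, at the price of dropping to $R\ge -C(\gamma_1,\gamma_2)$ rather than $R\ge 0$. The inner round sphere is then capped with a geodesic ball in a hyperbolic space form of the right curvature (which always exists and matches the mean curvature, but has $R<0$). After rescaling to $R\ge -n(n-1)$, the paper attaches a Shi--Tam style asymptotically hyperbolic end via the quasi-spherical equation, computes that the mass aspect is strictly negative when $H_2$ is large, smooths the corner \`a la Bonini--Qing, and contradicts the asymptotically hyperbolic positive mass theorem of Andersson--Cai--Galloway. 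The dimension restriction $3\le n\le 7$ enters through that positive mass theorem, not through $\mu$-bubble regularity.

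If you want to salvage your route, the honest version is: allow the cap to have $R\ge -C$ (a hyperbolic ball after the round-sphere reduction), and then run a $\mu$-bubble or band-width argument adapted to a negative scalar-curvature lower bound. That can be made to work, but it is essentially a reformulation of what the paper does with the AH mass, not a shortcut around it.
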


\begin{remark}
Note on the other hand, with the same conditions in Theorem \ref{highdimnoncob0}, we have an existence result, i.e,  there exists $\Lambda(n,\gamma_1,\gamma_2,H_2)>0$, such that for any $H_1\ge \Lambda$,  $(S^{n-1}, \gamma_1, H_1)$ is NNSC-cobordant to $(S^{n-1}, \gamma_2, H_2)$. One can follow the arguments in Proposition \ref{extend} to construct an NNSC-cobordism of $(S^{n-1}, \gamma_1, H_1)$ and
$(S^{n-1}, \gamma_2, H_2)$.
\end{remark}

Inspired by the recent work \cite{SWW} and \cite{M2}, we are able to show
\begin{theorem}\label{highdimnoncob10}\quad
Suppose $3\leq n \leq 7$, given Bartnik data $(\Sigma_1^{n-1}, \gamma_1, 0)$ and $(\Sigma_2^{n-1}, \gamma_2, H_2)$ with $H_2>0$, and $(\Sigma_2^{n-1}, \gamma_2)$ is the boundary of some n-dimensional compact manifold $\Omega_2^n$, then there exists  a  positive constant $\Lambda(n,\gamma_2)$ depending only on $n, \gamma_2$, such that for any $H_2\ge \Lambda$, $(\Sigma_1^{n-1}, \gamma_1, 0)$ and $(\Sigma_2^{n-1}, \gamma_2, H_2)$ admit no NNSC-cobordism.
\end{theorem}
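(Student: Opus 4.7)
The plan is to reduce Theorem~\ref{highdimnoncob10} to the nonexistence of NNSC fill-ins of $(\Sigma_2^{n-1},\gamma_2,H_2)$ for large $H_2$, a result of Shi--Wang--Wei \cite{SWW} in the range $3\le n\le 7$: whenever $(\Sigma_2,\gamma_2)$ bounds a compact manifold there is a constant $\Lambda_0(n,\gamma_2)>0$ such that $(\Sigma_2,\gamma_2,H_2)$ admits no NNSC fill-in once $H_2\ge\Lambda_0$. Arguing by contradiction, I suppose an NNSC cobordism $(\Omega^n,g)$ of $(\Sigma_1,\gamma_1,0)$ and $(\Sigma_2,\gamma_2,H_2)$ exists for arbitrarily large $H_2$; the idea is to manufacture from $(\Omega,g)$ an NNSC fill-in of $(\Sigma_2,\gamma_2,H_2)$ itself, contradicting \cite{SWW}.

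The first step exploits $H_1=0$ by doubling $(\Omega,g)$ across the minimal hypersurface $\Sigma_1$. The double $\widetilde\Omega:=\Omega\cup_{\Sigma_1}\Omega$ carries a natural $C^{0,1}$ metric $\widetilde g$ whose distributional scalar curvature is nonnegative, since the outward mean curvatures from the two sides of $\Sigma_1$ both vanish and hence sum to zero; Miao's corner smoothing then regularizes $\widetilde g$ to a smooth metric on $\widetilde\Omega$ with $R\ge 0$. The boundary of $\widetilde\Omega$ becomes two isometric copies of $\Sigma_2$, each carrying the Bartnik data $(\gamma_2,H_2)$.

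The second step fills in one of these two copies by a carefully constructed metric $g_2$ on $\Omega_2$. Inspired by \cite{M2}, I construct $g_2$ so that $R(g_2)\ge 0$, the induced boundary metric is exactly $\gamma_2$, and the outward boundary mean curvature $H_2'$ can be made arbitrarily negative at the cost of a threshold depending only on $n$ and $\gamma_2$. The model is a warped-product collar $dt^2+f(t)^2\gamma_2$ near $\Sigma_2$ with $f(0)=1$ and $f'(0)>0$ taken large (so that $H_2'=-(n-1)f'(0)$ is very negative), with $f''$ adjusted pointwise to keep the scalar curvature of the warped product nonnegative; this collar is then joined smoothly to an NNSC extension into the interior of $\Omega_2$ whose existence follows from the flexibility of NNSC metrics on manifolds with nonempty boundary. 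Gluing $(\widetilde\Omega,\widetilde g)$ to $(\Omega_2,g_2)$ along one copy of $\Sigma_2$ produces a piecewise-smooth metric on $\Omega^{\ast}:=\widetilde\Omega\cup_{\Sigma_2}\Omega_2$ whose corner contribution is the sum $H_2+H_2'$. Choosing $H_2\ge |H_2'|$ renders this nonnegative, and a further application of Miao's corner smoothing yields a smooth NNSC metric on $\Omega^{\ast}$ whose only remaining boundary realizes exactly the data $(\gamma_2,H_2)$. Hence $\Omega^{\ast}$ is an NNSC fill-in of $(\Sigma_2,\gamma_2,H_2)$, and taking $\Lambda:=\max(\Lambda_0,|H_2'|)$ contradicts \cite{SWW}.

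The main difficulty lies in the second step: producing a smooth NNSC metric on the prescribed $\Omega_2$ that realizes the preassigned induced boundary metric $\gamma_2$ and arbitrarily negative outward boundary mean curvature is delicate when $\gamma_2$ has no positive scalar curvature and $\Omega_2$ has complicated topology, and it is precisely here that the methods of \cite{M2} are imported; it also determines the dependence of $\Lambda$ on $\gamma_2$. The dimension restriction $3\le n\le 7$ is inherited from \cite{SWW} through the regularity of minimal hypersurfaces underlying its proof.
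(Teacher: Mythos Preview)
Your approach is valid but takes a genuinely different route from the paper's. The paper glues onto $\Sigma_2$ a copy of $\bar\Omega_2:=\Omega_2\#T^n$ equipped (via the existence theorem of \cite{SWW}) with a PSC metric restricting to $\gamma_2$ on the boundary, smooths and conformally corrects the corner, and then doubles across the minimal $\Sigma_1$; the resulting closed manifold is of the form $T^n\#K\#T^n$ yet carries PSC, contradicting Schoen--Yau \cite{SchY2}. You instead double across $\Sigma_1$ first, cap one of the two resulting $\Sigma_2$-boundaries with an NNSC $\Omega_2$, and exhibit an NNSC fill-in of the other copy of $(\Sigma_2,\gamma_2,H_2)$, contradicting the fill-in nonexistence theorem of \cite{SWW} (or \cite{M2}) as a black box. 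In effect the paper unrolls the torus-obstruction argument behind that very theorem, while you invoke it directly; your reduction is cleaner and more modular, the paper's is more self-contained.

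Two points on your second step. First, engineering $H_2'$ to be ``arbitrarily negative'' via a warped collar is unnecessary: you only need \emph{some} PSC metric $g_2$ on $\Omega_2$ with $g_2|_{\Sigma_2}=\gamma_2$, and then take $\Lambda\ge\max|H_2'|$ for whatever $H_2'$ arises. That existence statement is exactly what \cite{SWW} supplies, and it is what your appeal to ``flexibility of NNSC metrics on manifolds with nonempty boundary'' must rest on; the collar adds nothing, and the attribution should be to \cite{SWW} rather than \cite{M2}. Second, Miao's smoothing by itself yields only bounded scalar curvature, not $R\ge0$; you still need the two-stage conformal correction of Section~2 to restore nonnegativity while keeping the boundary data on the surviving $\Sigma_2$, and the second stage requires $R>0$ somewhere---another reason to take $g_2$ strictly PSC from \cite{SWW} rather than merely NNSC.
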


It is interesting to see that the constant $\Lambda$ in Theorem \ref{highdimnoncob10} is independent on  $(\Sigma_1^{n-1}, \gamma_1)$.

The rest of the paper is organized as follows. In section 2, we introduce some basic concepts, some lemmas for smoothing corners along some hypersurfaces in the manifold and deforming the scalar curvature of the manifold and the mean curvature of it's boundary. Moreover, we show that if $(S^{n-1}, \gamma_1, H_1)$ is NNSC-cobordant to $(S^{n-1}, \gamma_2, H_2)$, then the cobordism can be extended to a cobordism of two round spheres with constant mean curvature, and the scalar curvature of the extended cobordism has a lower bound depending on the initial Bartnik data. In section 3, we give the proofs for Theorem 1.1-1.4.

The authors would like to thank the referees for their careful work and constructive suggestions.

\section{Some Preliminary Lemmas}
\subsection{Some basic notions}
Let $M$ be an oriented $n$-dimensional smooth differentiable manifold with nonempty boundary $\p M$, $\Sigma$ is a smooth hypersurface in $M$ and $M\setminus \Sigma=M_-\cup M_+$.
Let's first review some relevant notions.
\begin{definition}(cf. Definition 1 in \cite{M})\quad
A metric ${\cal{G}}$ admitting corners along $\Sigma$ is defined to be a pair of $(g_-,g_+)$, where $g_-$ and $g_+$ are $C_{loc}^{2,\alpha}$ metrics on $M_-$ and $M_+$ so they are $C^2$ up to the boundary and they induce the same metric on $\Sigma$.
\end{definition}

\begin{definition}\quad
Given ${\cal{G}}=(g_-,g_+)$ on $M$, ${\cal{G}}$ is called asymptotically flat if
$(M_+, g_+)$ is asymptotically flat (AF) in the usual sense, i.e,
there is a compact subset $K$ containing $M_-$ such that, $M\setminus K$ is diffeomorphic to $\mathbb{R}^n\setminus\bar B^n_1(0)$, and in this coordinates chat, $g_+$ satisfies
\begin{equation}
g_{+ij}=\delta_{ij}+O(|x|^{-\tau}),~~\p_k g_{+ij}=O(|x|^{-\tau-1}), ~~\p_k\p_l g_{+ij}=O(|x|^{-\tau-2})
\end{equation}
for some $\tau>\frac{n-2}{2}$, where the partial derivatives are taken with respect to the Euclidean metric  and the Cartesian coordinates on $M\setminus K$.
\end{definition}

When the scalar curvature $R_{g_+}$ is $L^1$-integrable on $M_+$, the ADM mass can be defined on $M_+$.
\begin{definition}\quad
The Arnowitt-Deser-Misner (ADM) mass of $(M_+, g_+)$ is defined as:
\begin{equation}
m_{ADM}=\lim_{r\rightarrow\infty}\dfrac{\omega_{n-1}}{2(n-1)}\int_{S_r}(\p_i g_{+ij}-\p_j g_{+ii})\nu^jd\mu_r
\end{equation}
where $S_r$ is the coordinate sphere near the infinity,  $\nu$ is the outward unit normal vector.
\end{definition}

We also recall the following two kinds of quasi-local masses for surfaces in 3-dimensional manifolds.
\begin{definition}\label{hawkingmass}\quad
The Hawking mass of a surface $\Sigma$ in $M^3$ is defined as:
\begin{equation}
m_H(\Sigma)=\dfrac{|\Sigma|^{1/2}}{(16\pi)^{1/2}}(1-\dfrac{1}{16\pi}\int_{\Sigma} H^2 d\mu).
\end{equation}
Here $|\Sigma|$ denotes the area of $\Sigma$.
\end{definition}

\begin{definition}\label{bymass}\quad
The Brown-York mass of a topological sphere  $(\Sigma^2, \gamma)$ in $M^3$ with positive Gauss curvature is defined as:
\begin{equation}
m_{BY}(\Sigma)=\dfrac{1}{8\pi}\int_{\Sigma} (H_0-H)d\mu
\end{equation}
where $H_0$ is the mean curvature  of the image of isometric embedding $(\Sigma^2, \gamma)$  into $\mathbb{R}^3$, here and in the sequel our mean curvature is always with respect to the outward unit normal vector.
\end{definition}

Similarly, the asymptotically hyperbolic manifold with corners is defined as follows:
\begin{definition}\quad
Given ${\cal{G}}=(g_-,g_+)$ on $M$, ${\cal{G}}$ is called asymptotically hyperbolic if
\begin{enumerate}
  \item $M$ is conformally compact and the conformal infinity $\p \bar M$ is the standard sphere.
  \item there is a unique defining function $r$ in a collar neighborhood of $\p \bar M$ such that
  $$g_+=\sinh^{-2} r(dr^2+\gamma_{std}+\dfrac{h}{n}r^n+O(r^{n+1})),$$
  \item the asymptotic expansion can be differentiated twice.
\end{enumerate}
$Tr_{\gamma_{std}} h$ is called the mass aspect of $(M,{\cal{G}})$  and $\int_{S^{n-1}} Tr_{\gamma_{std}} hd\mu_{\gamma_{std}}$ is called the mass of $(M,{\cal{G}})$.
\end{definition}

\subsection{Some estimates for conformal deformation equations}
In \cite{M}, the author gives a way to smooth the corners along $\Sigma$. we define $\tilde M$ to be a possibly new differentiable manifold with the background topological space $M$, and then ${\cal{G}}$ becomes a continuous metric $g$ on $\tilde M$ since $g_-|_{\Sigma}=g_+|_{\Sigma}$. We list some results in \cite{M} here for the reader's convenience.
\begin{proposition}(cf. Proposition 3.1 in \cite{M})\label{Smoothing1}\quad
Given ${\cal{G}} = (g_-, g_+)$, if $H(\Sigma, g_-)\ge H(\Sigma, g_+)$, then there is a
family of $C^2$ metrics $\{g_\delta\}_{0<\delta \le \delta_0}$ on $\tilde M$ such that $g_\delta$ is uniformly close to $g$ on
$\tilde M$,
\begin{align}
&R_\delta(x,t)=O(1), ~~\text{for}~~ (x,t)\in  \Sigma\times \{-\frac{\delta^2}{100}|t|\le \frac{\delta}{2}\}\\
&R_\delta(x,t)=O(1)+(H(\Sigma, g_-)-H(\Sigma, g_+))\{\frac{\delta^2}{100}\varphi(\frac{\delta^2}{100})\}, ~~\text{for}~~ (x,t)\in  \Sigma\times [-\frac{\delta^2}{100},\frac{\delta^2}{100}]
\end{align}

where $O(1)$ represents quantities that are bounded by constants depending
only on ${\cal{G}}$, but not on $\delta$, and $\varphi\in C_c^\infty[-1,1]$ is a standard mollifier with $0\le\varphi\le 1$, $\varphi\equiv 1$ on $[-\frac{1}{3},\frac{1}{3}]$, and $\int_{-1}^1 \varphi=1$.
\end{proposition}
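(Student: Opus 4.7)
The statement is local near $\Sigma$, so I would work in Fermi coordinates $(x,t)$ on a tubular neighborhood $\Sigma\times(-\epsilon,\epsilon)$ adapted to $\Sigma=\{t=0\}$. In these coordinates
$$ g_\pm = dt^2 + h_\pm(x,t), $$
where $h_-(x,0)=h_+(x,0)$ is the common induced metric $\gamma$ on $\Sigma$, and the mean curvatures appear as $H_\pm = \mp\tfrac{1}{2}\gamma^{ij}\p_t h_{\pm ij}|_{t=0}$. The first $t$-derivative of the combined tangential metric $h:=h_-\chi_{\{t<0\}}+h_+\chi_{\{t\ge 0\}}$ therefore has a jump whose trace encodes precisely $H_--H_+$.

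Next I would mollify in the $t$-direction only, at scale $\eta=\delta^2/100$: using the standard mollifier $\varphi$ from the statement, set
$$ h_\delta(x,t) = \int_{-1}^{1}\varphi(s)\, h(x,t-\eta s)\, ds, $$
and define $g_\delta = dt^2 + h_\delta(x,t)$ on the tubular neighborhood, glueing to $g_\pm$ outside via a smooth cutoff supported in $|t|\le \delta/2$. Because the smoothing is only in the normal direction, $g_\delta$ remains in geodesic-normal form, is $C^\infty$ in the tubular neighborhood (and $C^2$ on $\tilde M$ after the cutoff), and converges uniformly to $g$ as $\delta\to 0$.

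The scalar curvature of a metric of the form $dt^2+h(x,t)$ is
$$ R = R_{h} - (\mathrm{tr}_{h} K)^2 - |K|_{h}^2 - 2\p_t(\mathrm{tr}_{h} K), \qquad K := \tfrac{1}{2}\p_t h. $$
For $\eta\le |t|\le \delta/2$ the mollifier samples only one side ($h_-$ or $h_+$), so $h_\delta$, $\p_t h_\delta$, and $\p_t^2 h_\delta$ are bounded by constants depending only on ${\cal G}$, giving $R(g_\delta)=O(1)$. On the corner slab $|t|\le \eta$ both sides contribute: since $\p_t h$ has a jump at $t=0$ whose trace equals $-2(H_+-H_-)$, the quantity $\p_t^2 h_\delta$ contains a singular contribution $\eta^{-1}\varphi(t/\eta)$ multiplied by that jump. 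Plugging into the formula for $R$ and isolating the singular piece, which arises only from $-2\p_t(\mathrm{tr}_{h_\delta}K_\delta)$, yields the advertised form
$$ R(g_\delta)(x,t) = O(1) + (H_- - H_+)\,\eta^{-1}\varphi(t/\eta). $$

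The main technical hurdle is the bookkeeping at the corner: I must verify that only the trace part of the jump $\p_t h_+|_0 - \p_t h_-|_0$, i.e.\ precisely the mean-curvature difference with the correct sign, survives as the singular $\eta^{-1}$ contribution, whereas the trace-free part of the jump, the Christoffel symbols of $h_\delta$, the intrinsic scalar curvature $R_{h_\delta}$, and the $|K_\delta|^2$ and $(\mathrm{tr}\,K_\delta)^2$ terms all remain $O(1)$ uniformly in $\delta$. Once this is checked, the hypothesis $H_-\ge H_+$ turns the singular term into a nonnegative distribution concentrated on the corner slab, which is the structural property that the subsequent conformal-deformation step in the paper exploits to restore nonnegativity of the scalar curvature.
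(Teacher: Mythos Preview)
Your sketch is correct and is essentially Miao's original argument; note that the present paper does not supply a proof of this proposition at all --- it is quoted verbatim as Proposition~3.1 of \cite{M} and used as a black box. Your Fermi-coordinate mollification in the normal variable, followed by the Gauss--Codazzi computation isolating the $\eta^{-1}\varphi(t/\eta)$ term from $-\,h^{ij}\partial_t^2 h_{\delta,ij}$, is exactly Miao's construction. Two small points worth tightening: (i) with both mean curvatures taken with respect to the same normal $\partial_t$ one has $H_\pm=\tfrac12\gamma^{ij}\partial_t h_{\pm,ij}|_{t=0}$ with the \emph{same} sign, so the jump $\gamma^{ij}[\partial_t h_{ij}]=2(H_+-H_-)$ and the singular contribution to $R$ is $(H_--H_+)\eta^{-1}\varphi(t/\eta)$ after the factor $-1$ from the $-2\partial_t H$ term, as you wrote; (ii) the trace-free part of the jump does produce an $\eta^{-1}$ term in $\partial_t^2 h_\delta$, but it disappears in $R$ precisely because the only second $t$-derivative in the scalar-curvature formula enters through the trace $h^{ij}\partial_t^2 h_{ij}$ --- that is the clean reason it contributes only $O(1)$.
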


When $H(\Sigma, g_-)\equiv H(\Sigma, g_+)$, we have
\begin{corollary}(cf. Corollary 3.1 in \cite{M})\label{Smoothing2}\quad
Given ${\cal{G}} = (g_-, g_+)$, if $H(\Sigma_0, g_-)\equiv H(\Sigma_0, g_+)$, then there is a
family of $C^2$ metrics $\{g_\delta\}_{0<\delta \le \delta_0}$ on $\tilde M$ so that $g_\delta$ is uniformly close to $g$ on
$\tilde M$, $g_\delta = {\cal{G}}$ outside $\Sigma\times(- \frac{\delta}{2},\frac{\delta}{2})$ and the scalar curvature of $g_\delta$ is uniformly
bounded inside $\Sigma\times [-\frac{\delta}{2},\frac{\delta}{2}]$ with bounds depending only on ${\cal{G}}$, but not on $\delta$.
\end{corollary}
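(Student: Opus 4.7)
The plan is to deduce the corollary directly from Proposition \ref{Smoothing1}. In that construction, the family $\{g_\delta\}$ is obtained by mollifying $\mathcal{G}$ in the Fermi-coordinate normal direction to $\Sigma$ at scale $\delta$. Using the standard mollifier $\varphi \in C_c^\infty[-1,1]$ and scaling its support into a sub-interval of length strictly less than $\delta/2$, one arranges that the modification is supported in $\Sigma \times (-\delta/2, \delta/2)$ and in particular $g_\delta = \mathcal{G}$ outside this strip. The same mollification produces a $C^2$ metric that converges uniformly to the continuous metric $g$ as $\delta \to 0$, by standard mollification theory applied to each component of the metric in Fermi coordinates.

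For the scalar curvature bound, Proposition \ref{Smoothing1} supplies a two-piece estimate: on the outer annular region $\Sigma \times \{\delta^2/100 \le |t| \le \delta/2\}$ one already has $R_\delta = O(1)$ with constants independent of $\delta$, while on the inner strip $\Sigma \times [-\delta^2/100, \delta^2/100]$ the bound is $O(1)$ plus a term proportional to $H(\Sigma, g_-) - H(\Sigma, g_+)$ multiplied by the concentrated mollifier factor. Under the hypothesis $H(\Sigma_0, g_-) \equiv H(\Sigma_0, g_+)$, this correction vanishes identically, and the residual $O(1)$ pieces on the inner strip and the outer annulus patch to give a uniform bound on all of $\Sigma \times [-\delta/2, \delta/2]$ with constants depending only on $\mathcal{G}$.

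The argument is essentially a direct specialization of Proposition \ref{Smoothing1}, so no serious obstacle arises. The conceptual point is simply that the jump in mean curvature across $\Sigma$ is precisely the singular source preventing a uniform scalar curvature bound when mollifying a corner metric; matching mean curvatures removes it. The only minor verifications are the support choice ensuring $g_\delta = \mathcal{G}$ outside the thin strip, and the uniform closeness to $g$, both of which are routine within the mollification scheme already set up in the proof of the proposition.
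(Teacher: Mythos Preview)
Your proposal is correct and matches the paper's approach: the paper presents this corollary as an immediate consequence of Proposition \ref{Smoothing1} under the hypothesis $H(\Sigma, g_-)\equiv H(\Sigma, g_+)$, without giving a separate proof. Your observation that the mean-curvature-jump term in the scalar curvature estimate vanishes identically, leaving only the uniform $O(1)$ bounds, is precisely the intended deduction.
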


In the following, we still use $M$ instead of $\tilde M$ to denote the manifold after smoothing corners. We focus on the case that both $R_{g_-}$ and $R_{g_+}$ are nonnegative  and there is a point $p$ in $M$ and a constant $\delta_0$ with $R_{g_-}(p)\geq \delta_0>0$  or $R_{g_+}(p)\geq \delta_0>0$,  and then we can find a  smooth and  nonnegative scalar curvature metric $g$ on $M$ such that $g|_{\p M}=g_-|_{\p M}$ (or $g|_{\p M}=g_+|_{\p M}$) and $H_g(\p M)=H_{g_-}(\p M)$ (or $H_g(\p M)=H_{g_+}(\p M)$)  by performing twice conformal deformations. We deal with the following cases respectively:
\begin{enumerate}
  \item $M$ is an asymptotically flat manifold with an inner boundary $\p M$;
  \item $M$ is a compact manifold with boundary $\p M$($\p M$ may have more than one connected component);
\end{enumerate}

For case 1, consider the following equation
\begin{equation}\label{conformal1}
\left\{
\begin{split}
&\Delta u_\delta+\dfrac{1}{c(n)}R_{g_{\delta-}}u_\delta = 0, ~~\text{in}~~ M\\
&u_\delta = 1, ~~\text{on} ~~\partial M\\
&\lim_{|x|\rightarrow\infty}u_\delta(x) = 1
\end{split}
\right.
\end{equation}
where $R_{g_{\delta-}}$ is the negative part of $R_{g_{\delta}}$, i.e., $R_{g_{\delta-}}=\max\{0, -R_{g_{\delta}}\}\geq 0$.

The solvability of \eqref{conformal1} is guaranteed by the following:
\begin{lemma}(cf. Lemma 3.1 in \cite{SWY}, see also \cite{M} and \cite{SchY})\label{CD1}\quad
Let $(M,g_M)$ be an asymptotically flat n-manifold with inner boundary $\p M$, and $h$
be a function that has the same decay rate at $\infty$ as $R_{g_M}$, then there exists a number $\e_N>0$
depending only on the $C^0$ norm of $g_M$ and the decay rate of $g_M$, $\p g_M$ and $\p^2 g_M$ at $\infty$ so that if
\begin{equation*}
\left(\int_M |h_-|^{\frac{n}{2}}d\mu_{g_M}\right)^\frac{2}{n}\le \e_N
\end{equation*}
where $h_-$ is the negative part of $h$, then
\begin{equation}
\left\{
\begin{split}
\Delta u-hu&=0, ~~\text{in}~~ M\\
u&=1, ~~\text{on} ~~\p M\\
\lim_{|x|\rightarrow\infty}u&=1,
\end{split}
\right.
\end{equation}
has a $C^2$ positive solution $u$ such that
\begin{equation}\label{u asymp}
u=1+\dfrac{A}{|x|^{n-2}}+B
\end{equation}
for some constant $A$ and some function $B$, where $B=O(|x|^{1-n})$ and $\p B=O(|x|^{-n})$, here $\p B$ denote the partial derivative.
\end{lemma}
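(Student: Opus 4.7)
My plan is to reduce the boundary-value problem to one with homogeneous data and then apply a direct variational argument in weighted Sobolev spaces adapted to the AF geometry. Set $u=1+v$, so that the problem becomes
\begin{equation*}
\Delta v - hv = h \quad \text{in } M, \qquad v = 0 \text{ on } \partial M, \qquad v \to 0 \text{ at infinity}.
\end{equation*}
The natural function space is $W^{1,2}_{-\tau}(M)$ (weighted at infinity with exponent $-\tau$ for some $0<\tau<n-2$), intersected with trace zero on $\partial M$; call this space $X$. Because $h$ inherits the decay of $R_{g_M}$, the right-hand side $h$ lies in a weighted $L^2$ space with decay exponent faster than $-n/2-1$, hence in the dual of $X$.

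The crux is to show that the symmetric bilinear form
\begin{equation*}
B(v,w) \;=\; \int_M \bigl(\nabla v\cdot\nabla w + h\, v w\bigr)\, d\mu_{g_M}
\end{equation*}
is coercive on $X$. The positive-part contribution is harmless, so I need only control the negative-part term. By Hölder and the Sobolev inequality on the AF manifold $(M,g_M)$ (applicable to functions in $X$ since they vanish on $\partial M$ and decay at infinity), one has
\begin{equation*}
\Bigl|\int_M h_- v^2\, d\mu_{g_M}\Bigr| \;\le\; \|h_-\|_{L^{n/2}}\,\|v\|_{L^{2n/(n-2)}}^2 \;\le\; C_S\,\|h_-\|_{L^{n/2}}\,\|\nabla v\|_{L^2}^2,
\end{equation*}
where $C_S$ is the Sobolev constant of $(M,g_M)$, depending only on $\|g_M\|_{C^0}$ and the decay rates. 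Choosing $\varepsilon_N=(2C_S)^{-1}$, the smallness hypothesis gives $B(v,v)\ge \tfrac12\|\nabla v\|_{L^2}^2$, and Lax--Milgram produces a unique weak solution $v\in X$. Elliptic regularity (standard interior and boundary Schauder estimates, since coefficients are $C^{0,\alpha}$) upgrades $v$ to $C^{2,\alpha}_{\text{loc}}$, hence $u=1+v$ is a $C^2$ solution. Positivity $u>0$ then follows from the maximum principle: at any hypothetical negative minimum of $u$ in the interior one would have $\Delta u\ge 0$ and $hu=\Delta u$, incompatible with $u<0$ and the structure of $h$, while the boundary value is $1$ and the limit at infinity is $1$, so $u>0$ throughout.

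The remaining piece is the expansion \eqref{u asymp}. I would extract it by expanding $v$ against the Euclidean Green's function at infinity. Writing the equation in the asymptotic chart as $\Delta_{\text{eucl}} v = f$ with $f = (\Delta_{\text{eucl}}-\Delta_{g_M})v + hu$, and using the already established decay $v=O(|x|^{-\tau})$ together with the decay rates on $g_M$ and $h$, the source $f$ decays at least like $|x|^{-n-\sigma}$ for some $\sigma>0$. Convolving $f$ with the Newtonian kernel $c_n|x|^{2-n}$ and performing one multipole expansion yields a monopole term $A|x|^{2-n}$ plus a remainder $B$ satisfying the claimed pointwise and derivative decay. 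The main technical obstacle is this last step: verifying that the iterated bootstrap at infinity (using weighted Schauder estimates) really produces the sharp decay $B=O(|x|^{1-n})$ and $\partial B=O(|x|^{-n})$ rather than something weaker; this is handled by iterating the inversion--Kelvin transform / weighted elliptic estimates once $v\in W^{2,p}_{-\tau}$ is in hand, exploiting that the actual inhomogeneity $h$ is better than borderline in the weighted scale.
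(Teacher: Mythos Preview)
The paper does not give its own proof of this lemma: it is simply quoted with references to \cite{SWY}, \cite{M}, and \cite{SchY}, and the proof is omitted. Your outline---reduce to $v=u-1$, use the Sobolev inequality on the AF background to absorb the $h_-$ term and get coercivity of the bilinear form, invoke Lax--Milgram, then bootstrap regularity and read off the asymptotic expansion from the Newtonian kernel---is precisely the standard argument in those references, so methodologically you are on the same track as the cited proofs.

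There is, however, a genuine gap in your positivity step. At a hypothetical interior negative minimum of $u$ you get $\Delta u\ge 0$ and hence $h\,u\ge 0$, which only forces $h\le 0$ at that point; since $h$ is allowed to be negative (that is the whole point of the $h_-$ hypothesis), this yields no contradiction. The correct argument reuses your coercivity estimate: multiply the equation by $u_-=\max(-u,0)$, which vanishes on $\partial M$ and at infinity, and integrate to obtain
\[
\int_M |\nabla u_-|^2 \;\le\; \int_M h_-\,u_-^2 \;\le\; C_S\,\|h_-\|_{L^{n/2}}\int_M |\nabla u_-|^2 .
\]
With $\varepsilon_N<C_S^{-1}$ this forces $u_-\equiv 0$, hence $u\ge 0$, and the strong maximum principle then gives $u>0$. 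Apart from this correction, your sketch is sound; the final bootstrap for the sharp remainder decay $B=O(|x|^{1-n})$, $\partial B=O(|x|^{-n})$ does require one more iteration than you indicate, but the mechanism you describe (weighted Schauder or Kelvin inversion) is the right one.
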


For case 2, consider the following equation
\begin{equation}\label{conformal2}
\left\{
\begin{split}
&\Delta u_\delta+\dfrac{1}{c(n)}R_{g_{\delta-}}u_\delta =0, ~~\text{in}~~ M\\
&u_\delta =1, ~~\text{on } ~~\p M
\end{split}
\right.
\end{equation}
where $c(n)=\frac{4(n-1)}{n-2}$.

Similarly, we establish the following lemma to guarantee the solvability of \eqref{conformal2}
\begin{lemma}\label{CD2}\quad
Let $(M,g_M)$ be a compact n-manifold with boundary $\p M$, then there exists a number $\e_N>0$
depending only on the $C^0$ norm of $g_M$ so that if the function $h$ satisfies
\begin{equation*}
\left(\int_M |h_-|^{\frac{n}{2}}d\mu_{g_M}\right)^\frac{2}{n}\le \e_N
\end{equation*}
where $h_-$ is the negative part of $h$, then
\begin{equation}
\left\{
\begin{split}
&\Delta u-hu=0, ~~\text{in}~~ M\\
&u=1, ~~\text{on} ~~\p M
\end{split}
\right.
\end{equation}
has a unique $C^2$ positive solution $u$.
\end{lemma}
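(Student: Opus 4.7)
The plan is to reduce the boundary-value problem to a homogeneous Dirichlet problem by setting $u = 1 + v$ with $v \in H^1_0(M)$. A direct computation shows that $u$ solves $\Delta u - h u = 0$ with $u|_{\p M} = 1$ if and only if $v$ weakly solves $\Delta v - h v = h$ in $H^1_0(M)$, i.e.
\begin{equation*}
B(v, \phi) := \int_M \bigl(\nabla v \cdot \nabla \phi + h\, v \phi\bigr) d\mu_{g_M} = -\int_M h \phi \, d\mu_{g_M}
\end{equation*}
for every $\phi \in H^1_0(M)$. The task is then to verify that $B$ is bounded and coercive on $H^1_0(M)$, after which Lax-Milgram will furnish a unique weak solution.

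The essential point is coercivity. Splitting $h = h_+ - h_-$, H\"older's inequality with exponents $\tfrac{n}{2}$ and $\tfrac{n}{n-2}$ gives
\begin{equation*}
\int_M h_- v^2 \, d\mu_{g_M} \le \|h_-\|_{L^{n/2}(M)} \|v\|_{L^{2n/(n-2)}(M)}^{2}.
\end{equation*}
Since $v \in H^1_0(M)$, the Sobolev-Poincar\'e inequality on the compact manifold $(M, g_M)$ provides a constant $C_S$ (controllable in terms of the $C^0$ norm of $g_M$) with $\|v\|_{L^{2n/(n-2)}} \le C_S \|\nabla v\|_{L^2}$. Combining these yields
\begin{equation*}
B(v, v) \ge \bigl(1 - C_S^{2} \|h_-\|_{L^{n/2}}\bigr) \|\nabla v\|_{L^2}^{2},
\end{equation*}
so setting $\e_N := 1/(2 C_S^{2})$ makes $B(v,v) \ge \tfrac12 \|\nabla v\|_{L^2}^{2}$ under the hypothesis of the lemma. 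Continuity of $B$ and of the linear functional $\phi \mapsto -\int_M h \phi$ on $H^1_0(M)$ follows from the same H\"older-Sobolev estimates, so Lax-Milgram produces a unique $v \in H^1_0(M)$. Standard elliptic regularity up to the boundary then upgrades $v$ to $C^2$, and hence $u = 1 + v \in C^2(M)$.

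Finally one must show $u > 0$ and record uniqueness. Set $u_- := \max\{-u, 0\}$; since $u|_{\p M} = 1$, we have $u_- \in H^1_0(M)$, and testing $\Delta u - h u = 0$ against $u_-$ and integrating by parts gives $B(u_-, u_-) = 0$. Coercivity forces $u_- \equiv 0$, so $u \ge 0$, and the strong maximum principle applied to the operator $\Delta - h$ (with $h$ bounded and $u \not\equiv 0$ by the boundary data) upgrades this to $u > 0$ throughout $M$. Uniqueness follows at once by applying coercivity to the difference of two putative solutions. The only genuine point that requires care is that the Sobolev constant $C_S$ can be estimated in terms of the stated quantities; this is classical in the compact setting and parallels the way the asymptotically flat analogue is handled in Lemma \ref{CD1}.
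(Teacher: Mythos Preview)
Your argument is correct and is essentially the standard one the paper has in mind: the paper does not give a detailed proof, simply noting that it parallels (and is easier than) Lemma~3.2 of Schoen--Yau \cite{SchY}, where the same Sobolev-inequality mechanism controls the $h_-$ term. Your Lax--Milgram formulation together with the $u_-$ test for positivity makes this explicit; just note that boundedness of $B$ and of the right-hand side functional implicitly requires some integrability of $h_+$ as well (e.g.\ $h\in L^{n/2}$ or $h$ bounded), which is harmless in the applications in the paper since $h$ there is continuous.
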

\begin{proof}\quad
The proof is similar to Lemma 3.2 in \cite{SchY}, even more easier, since we don't need to consider the asymptotic property at the infinity, we omit it here.
\end{proof}

Moreover, for both cases, the maximum principle implies that $u_\delta\ge 1$. By a similar argument to Proposition 4.1 and Lemma 4.2 in \cite{M}, we have
\begin{lemma}\label{CT}\quad
 Let $u_\delta$ be the solution of \eqref{conformal1} or \eqref{conformal2}  then it satisfies
$$\lim\limits_{\delta\rightarrow 0} ||u_\delta-1||_{L^\infty(M)}=0$$
and $||u_\delta||_{C^{2,\alpha}}\le C_K$, Here $K$ is any compact set in $M\setminus \Sigma$, $C_K$ is a constant depending only on ${\cal{G}}$ and $K$.
Let $\tilde g_\delta=u_\delta^{\frac{4}{n-2}} g_\delta$ be the resulting metrics, the scalar curvature of $\tilde g_\delta$ is nonnegative.
Moreover, for the case 1 the mass of $\tilde g_\delta$ converges to the mass of ${\cal{G}}$ as $\delta\rightarrow 0$.
\end{lemma}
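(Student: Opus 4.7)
The plan follows the scheme of Proposition 4.1 and Lemma 4.2 in \cite{M}, handling cases 1 and 2 in parallel since only minor modifications are needed.

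\textbf{Step 1: Smallness of $\|R_{g_{\delta-}}\|_{L^{n/2}}$.} The combination of Proposition \ref{Smoothing1} and the standing hypothesis $H(\Sigma,g_-)\ge H(\Sigma,g_+)$ is the crucial input: the term carrying the mollifier, which is of order $\delta^{-2}$, is multiplied by the \emph{nonnegative} factor $H(\Sigma,g_-)-H(\Sigma,g_+)$, so it contributes only to the positive part of $R_{g_\delta}$. Thus $R_{g_{\delta-}}$ is uniformly bounded by a constant depending only on ${\cal G}$, and it is supported in a collar of $g_\delta$-volume at most $O(\delta)$. Consequently
\[
\left(\int_M|R_{g_{\delta-}}|^{n/2}\,d\mu_{g_\delta}\right)^{2/n}\le C\,\delta^{2/n}\longrightarrow 0,
\]
so the smallness hypothesis of Lemma \ref{CD1} (case 1) or Lemma \ref{CD2} (case 2) is satisfied for $\delta$ small, and $u_\delta$ is well defined. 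The same bound holds under Corollary \ref{Smoothing2}, where $R_{g_\delta}$ itself is uniformly bounded on an even thinner collar.

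\textbf{Step 2: Uniform convergence $u_\delta\to 1$.} The maximum principle applied to \eqref{conformal1}/\eqref{conformal2}, together with the boundary and infinity data $u_\delta\equiv 1$, gives $u_\delta\ge 1$. Setting $v_\delta:=u_\delta-1\ge 0$ yields
\[
\Delta v_\delta+\tfrac{1}{c(n)}R_{g_{\delta-}}v_\delta=-\tfrac{1}{c(n)}R_{g_{\delta-}},
\]
with zero boundary/infinity data. Standard De Giorgi--Nash--Moser iteration bounds $\|v_\delta\|_{L^\infty}$ by a constant times $\|R_{g_{\delta-}}\|_{L^p}$ for some $p$ slightly larger than $n/2$, so Step 1 forces $\|v_\delta\|_{L^\infty}\to 0$.

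\textbf{Step 3: Interior $C^{2,\alpha}$ bounds and the scalar-curvature sign.} Away from $\Sigma$, once $\delta$ is smaller than the distance from the compact set $K$ to $\Sigma$, the smoothed metric $g_\delta$ coincides with ${\cal G}$ on a neighborhood of $K$, so Schauder estimates applied to \eqref{conformal1}/\eqref{conformal2} combine with Step 2 to give $\|u_\delta\|_{C^{2,\alpha}(K)}\le C_K$. The nonnegativity of $R_{\tilde g_\delta}$ then follows from the conformal change formula and the PDE $-c(n)\Delta u_\delta=R_{g_{\delta-}}u_\delta$:
\[
R_{\tilde g_\delta}=u_\delta^{-\frac{n+2}{n-2}}\bigl(-c(n)\Delta u_\delta+R_{g_\delta}u_\delta\bigr)=u_\delta^{-\frac{4}{n-2}}(R_{g_{\delta-}}+R_{g_\delta})=u_\delta^{-\frac{4}{n-2}}R_{g_{\delta+}}\ge 0.
\]

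\textbf{Step 4: ADM mass convergence in case 1.} Outside a compact set containing the smoothing region, $g_\delta\equiv g_+$, hence $m_{ADM}(g_\delta)=m_{ADM}({\cal G})$. The conformal change formula for the ADM mass, combined with the expansion \eqref{u asymp}, reduces the problem to showing $A_\delta\to 0$. Applying Green's identity to \eqref{conformal1} on the exhaustion $\{|x|\le r\}\cap M$ and letting $r\to\infty$ expresses $A_\delta$ as a dimensional constant times $\int_M R_{g_{\delta-}} u_\delta\,d\mu_{g_\delta}+\int_{\p M}\p_\nu u_\delta\,d\sigma$. Step 1 together with the uniform $L^\infty$ bound on $u_\delta$ controls the first integral, while Step 3 applied with $K$ a neighborhood of $\p M$ (which is disjoint from $\Sigma$) controls the second. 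Both contributions tend to $0$, so $m_{ADM}(\tilde g_\delta)\to m_{ADM}({\cal G})$.

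The main technical obstacle is Step 1: one has to read off from the explicit Miao smoothing that the $\delta^{-2}$-blowup of the scalar curvature occurs only in the favorable sign, so that the negative part survives only as an $O(1)$ quantity on a collar of shrinking volume. Once this is in place the rest is a direct adaptation of the estimates in \cite{M}, with the only new ingredient being the parallel treatment for compact manifolds with boundary via Lemma \ref{CD2}.
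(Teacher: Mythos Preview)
Your proposal is correct and follows essentially the same approach as the paper, which does not give its own proof but simply refers to Proposition~4.1 and Lemma~4.2 in \cite{M}; your sketch faithfully reconstructs that argument, with the only adaptation being the extra boundary term on $\partial M$ in Step~4. One minor clarification: the vanishing of $\int_{\partial M}\partial_\nu u_\delta$ in Step~4 requires combining the $C^{2,\alpha}$ bound from Step~3 with the $L^\infty$ convergence from Step~2 via interpolation (this is exactly the content of Remark~\ref{Interpolation}), not the $C^{2,\alpha}$ bound alone.
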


\begin{remark}\label{Interpolation}
By $\lim\limits_{\delta\rightarrow 0} ||u_\delta-1||_{L^\infty(M)}=0$, $||u_\delta||_{C^{2,\alpha}}\le C_K$
and the interpolation inequality(see Lemma 6.35 in \cite{GT}), we can see that $\lim\limits_{\delta\rightarrow 0} |\nabla u_\delta|=0$.
\end{remark}

After the conformal deformation, the mean curvature of the boundary is deformed by
$$H_{\tilde g_\delta}=(\dfrac{c(n)}{2}\dfrac{\p u_\delta}{\p \nu}+H_{g_\delta}u_\delta)u_\delta^{-\frac{n}{n-2}}=\dfrac{c(n)}{2}\dfrac{\p u_\delta}{\p \nu}+H_{g_\delta}.$$
here $\nu$ denote the unit outward normal vector, by the maximum principle we have
$$\frac{\p u_\delta}{\p \nu}\leq 0,$$
then
$$H_{\tilde g_\delta}\leq H_{g_\delta}=H_g.$$

Now we preform another conformal deformation, i.e., solve the following
\begin{equation}\label{conformal with eta}
\left\{
\begin{split}
&\Delta w-\dfrac{1}{c(n)}\eta R_{{tilde g_\delta}}w =0, ~~\text{in}~~ M\\
&w =1, ~~\text{on} ~~\partial M
\end{split}
\right.
\end{equation}
where $\eta$ is a cut-off function on a neighborhood $U_p$ of $p$ in M  where $R_{\tilde g_\delta}$, we get a solution w.

Let $\hat g_\delta=w^{\frac{4}{n-2}} \tilde g_\delta$,  the scalar curvature
\begin{equation*}
R_{\hat g_\delta}=w^{-\frac{n+2}{n-2}}(-{c(n)}\Delta w+R_{\tilde g_{\delta}}w)=w^{-\frac{4}{n-2}}((1-\eta) R_{\tilde g_{\delta}})\ge 0
\end{equation*}
and the mean curvature of the boundary is
\begin{equation}
H_{\hat g_\delta}=(\dfrac{c(n)}{2}\dfrac{\p w}{\p \nu}+H_{\tilde g_\delta}w)w^{-\frac{n}{n-2}}=\dfrac{c(n)}{2}\dfrac{\p w}{\p \nu}+H_{\tilde g_\delta}=\dfrac{c(n)}{2}(\dfrac{\p u_\delta}{\p \nu}+\dfrac{\p w}{\p \nu})+H_{g_\delta}.
\end{equation}
here $\nu$ is still the unit outward normal vector since conformal deformation do not change the direction of the normal vector, and by maximum principle, we have $\frac{\p w}{\p \nu}>0$.
Note by lemma \ref{CD2}, we have $\lim\limits_{\delta\rightarrow 0}u_\delta=1$, which implies that
$$\lim\limits_{\delta\rightarrow 0}\frac{\p u_\delta}{\p \nu}=0,$$
 then take $\delta$ sufficiently small, we have
 $$(\frac{\p u_\delta}{\p \nu}+\frac{\p w}{\p \nu})>0, $$i.e.,
 $$H_{\hat g_\delta}>H_{g_\delta}.$$

\begin{remark}
When both $R_{g_+}\ge -C_+$ and $R_{g_-}\ge -C_-$, we can get a  smooth metric $g$ with $R_g\ge -{c(n)}(C+1)$ on the whole manifold in a similar way, where ${C=\max\{C_+,C_-\}}$. To see this, solve the following equation
\begin{equation}\label{conformal3}
\left\{
\begin{split}
&\Delta u_\delta+\dfrac{1}{c(n)}(R_{g_{\delta}}+C)_-u_\delta =0, ~~\text{in}~~ M\\
&u_\delta =1, ~~\text{on } ~~\p M
\end{split}
\right.
\end{equation}
where $(R_{g_{\delta}}+C)_-$ denote the negative part of $R_{g_{\delta}}+C$. The \eqref{conformal3} is also solvable since $(R_{g_{\delta}}+C)_-$ satisfies the condition in Lemma \ref{CD2}. Then let $\tilde g_\delta=u_\delta^{\frac{4}{n-2}} g_\delta$,
Then $$R_{\tilde g_\delta}=u_\delta^{\frac{n+2}{n-2}}(R_{g_\delta}u_\delta-{{c(n)}}\Delta u_\delta)={{c(n)}}u_\delta^{\frac{4}{n-2}}((R_{g_\delta}+C)_+ -C)\ge -{{c(n)}}C$$
since $u_\delta\ge 1$ by the maximum principle. $u_\delta$ satisfies the same property in Lemma \ref{CT}. Also after another conformal deformation as above the mean curvature $H_{\hat g_\delta}>H_{g_\delta}$.
\end{remark}

The following lemma is very useful in some gluing construction. In fact, it has been established in the proof of Theorem 1.5 in \cite{SWWZ}, we include a proof here for the sake of completeness.
\begin{proposition}\label{DBH2}\quad
 Let $(\Omega, g)$ be a compact manifold with boundary $\p \Omega=\Sigma$, $g|_{\p \Omega}=\gamma$, $H$ is a smooth function on $\Sigma$.
 The mean curvature $H_g$ satisfies $H_g>H$ and the scalar curvature $R_g$ of $(\Omega, g)$ satisfies one of the following condition:
 \begin{enumerate}
   \item $R_g\ge S$, $R_g>S$ at some point $p\in \Omega$, where $S$ is some nonnegative constant,
   \item $R_g>S$, where $S$ is a constant.
 \end{enumerate}
Then we can find a metric $\tilde g$ on $\Omega$ such that $R_{\tilde g}>S$, $\tilde g|_{\p \Omega}=\gamma$ and $H_{\tilde g}=H$.
\end{proposition}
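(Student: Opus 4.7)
The plan is to construct $\tilde g$ by a localized non-conformal deformation of $g$ in a thin collar of $\partial \Omega$. Working in Fermi coordinates $(\rho, x)$ with $\rho$ the inward distance, write $g = d\rho^2 + g_\rho$, so that $g_0 = \gamma$ and the mean curvature of $\partial \Omega$ with respect to the outward unit normal is encoded by $\partial_\rho g_\rho|_{\rho=0}$. Modifying $g_\rho$ to first order in $\rho$ adjusts the mean curvature at $\partial \Omega$, while modifying the higher-order behavior controls the scalar curvature via the standard Gauss-Codazzi formula
$$R_{\tilde g} = R_{\tilde g_\rho} + \tilde H^2 - 3|\tilde K|^2 - 2\partial_\rho \tilde H, \qquad \tilde K := \tfrac{1}{2}\partial_\rho \tilde g_\rho, \quad \tilde H := \mathrm{tr}_{\tilde g_\rho} \tilde K.$$

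Take the ansatz $\tilde g_\rho = g_\rho + \chi(\rho/\delta) \bigl(2\rho\,\mu(x) + \rho^2\, \nu(x)\bigr) g_0$, where $\chi$ is a standard cutoff equal to $1$ on $[0,1/2]$ and vanishing outside $[0,1]$, $\mu(x) = (H_g - H)(x)/(n-1) > 0$ is chosen (up to sign convention) to realize $H_{\tilde g} = H$, and $\nu(x)$ is a function whose sign and magnitude are tuned to produce a strictly positive contribution to $R_{\tilde g}$ near $\partial \Omega$. The first-order term leaves $\tilde g|_{\partial \Omega} = \gamma$ intact and gives the desired boundary mean curvature; the quadratic term does not alter either boundary datum but contributes to $\partial_\rho \tilde H|_{\rho=0}$, and hence, through the Gauss-Codazzi equation above, to $R_{\tilde g}|_{\partial \Omega}$. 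For $\nu$ chosen appropriately this contribution is strictly positive and dominates the other Gauss-Codazzi terms, forcing $R_{\tilde g} > S$ throughout the collar. Outside the collar $\tilde g = g$, and one needs $R_g > S$ there; in case (2) this is automatic.

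In case (1) I first replace $g$ by a conformally perturbed metric with $R_g > S$ strictly on the complement of an arbitrarily thin collar of $\partial \Omega$, while leaving $g$ unchanged on the collar itself. This auxiliary step solves a Dirichlet problem for a perturbed conformal Laplacian with a source term supported in a small neighborhood of the distinguished point $p$, and uses the strong maximum principle together with the strict slack $R_g(p) > S$ to certify strict scalar-curvature positivity in the interior. I expect the main technical obstacle to be managing the cutoff transition region $\rho \in (\delta/2, \delta)$, where the derivatives $\chi'$, $\chi''$ inject terms of order $\delta^{-1}$ into $\tilde K$ and $\partial_\rho \tilde H$ of either sign; I handle this by scaling $\nu(x)$ proportionally to $\delta^{-1}$ so that its positive contribution to $-2\partial_\rho \tilde H$ dominates the cutoff-induced fluctuations uniformly in $\delta$, while keeping $\tilde g - g$ small in $C^0$ so that $R_{\tilde g_\rho}$ remains close to $R_{g_\rho}$ and the cross-terms stay negligible.
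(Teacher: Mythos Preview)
Your overall architecture matches the paper's: reduce case (1) to case (2) by a conformal perturbation exploiting the strict inequality at $p$, then in a collar use a first-order modification to hit the target mean curvature and a second-order one to force the scalar curvature up. The paper's ansatz is multiplicative, $g_2=dt^2+(1+m(x)\lambda(t))^2\bar g_1(x,t)$ with $\lambda(0)=0$, $\lambda'(0)=-1$ and $\lambda''\ll -1$, which is morally your $2\rho\mu+\rho^2\nu$ in disguise.

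The gap is in the transition region. You propose to control it by scaling $\nu\sim\delta^{-1}$ so that the ``good'' contribution $-2\partial_\rho\tilde H\supset -2(n-1)\chi\nu$ dominates the cutoff-induced terms. But the cutoff acts on the $\nu$ term itself: writing $s=\rho/\delta$, the full contribution of the $\nu$ term to $\partial_\rho^2(\chi\rho^2\nu)$ is $\nu\,(s^2\chi(s))''$, and all three pieces $s^2\chi''$, $4s\chi'$, $2\chi$ are of the same order. Since $s^2\chi(s)$ goes from a positive value with positive slope at $s=1/2$ down to zero with zero slope at $s=1$, its second derivative necessarily takes both signs on $(1/2,1)$. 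Hence scaling $\nu$ up scales the bad terms up with it; you cannot make the sign come out right by size alone. The same issue hits the $\mu$ term via $(s\chi)''$. So the proposed fix does not close, and there is no evident way to rescue it with a single cutoff of this shape.

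The paper sidesteps this entirely: it defines $g_2$ on the collar \emph{without} a cutoff, checks that $R_{g_1}\ge S+\epsilon_1$ on $\Omega$ and $R_{g_2}\ge S+\epsilon_1$ on a smaller collar $\Sigma\times[-t_1,0]$, and then invokes Theorem~5 of Brendle--Marques--Neves \cite{BMN} to produce a metric $\tilde g$ interpolating between $g_1$ and $g_2$ with $R_{\tilde g}>S$. That interpolation lemma is exactly the missing ingredient in your argument. A smaller point: for case~(1) the paper does the conformal deformation on all of $\Omega$ with Dirichlet data $w=1$ on $\partial\Omega$ (so the collar metric changes too), and then checks $H_{g_1}>H$ survives via the normal derivative of $w$; your plan to keep the collar untouched leaves you with only $R_g\ge S$ there, which would also rely on the transition step you have not secured.
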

\begin{proof}\quad
The first case can be reduced to the second one by the following perturbations.
We first consider the case $S=0$, by the assumption, $R_g>0$ in some neighborhood $U_p$ of $p$. Let $\eta$ be a cut-off function on $U_p$.
We'll increase the lower bound of the scalar curvature a little by a conformal deformation. For $\epsilon\ge 0$, consider the following equation
\begin{equation}\label{conformal with epsilon}
\left\{
\begin{split}
&\Delta w_\epsilon-\dfrac{1}{c(n)}\eta R_{g}w_{\e} =-\epsilon, ~~\text{in}~~ \Omega\\
&w_\e =1, ~~\text{on} ~~\Sigma
\end{split}
\right.
\end{equation}
For sufficiently small $\epsilon$,  equation \eqref{conformal with epsilon} has a positive and smooth solution $w_\epsilon$.

Let $g_1=w_\e^{\frac{4}{n-2}} g$, then
\begin{equation*}
R_{ g_1}=w_\e^{-\frac{n+2}{n-2}}(-c(n)\Delta w_\e+R_{g}w_\e)=w_\e^{-\frac{4}{n-2}}((1-\eta) R_{g}+c(n)\e w_\e^{-1} )\ge c(n)\e w_\e^{-\frac{n+2}{n-2}}>0.
\end{equation*}
On $\Sigma$ we have
$$H_{g_1}=\dfrac{c(n)}{2}\dfrac{\p w_\e}{\p \nu}+H_{ g}.$$
When $\e=0$, similar to equation \eqref{conformal with eta}, we have $\frac{\p w_0}{\p \nu}|_\Sigma>0$. By $C^2$ compactness of solutions of \eqref{conformal with epsilon}, we obtain $$H_{g}+\dfrac{c(n)}{2}\frac{\p w_\e}{\p \nu}>H$$
for $\e$ sufficiently small, and then $H_{g_1}>H$.

For $S>0$,  the scalar curvature can also be perturbed in the similar way, see more details in the proof of Theorem 1.5 in \cite{SWWZ}.

Now we can assume $R_g>S$ and we'll construct a metric on a small collar neighborhood of $\Sigma$. For some small $t_0> 0$, $\Sigma\times [-t_0, 0]$ is diffeomorphic to a $t_0$-collar neighborhood of $\Sigma$ in $\Omega$. Let $\Sigma_t$ denote $\Sigma\times \{t\}$ and identify $\Sigma\times \{0\}$ with $\Sigma$. In this $t_0$-collar neighborhood, $g_1$ can be written by
$g_1=dt^2+\bar g_1(t)$, where $\bar g_1(t)$ is the induced metric on $\Sigma_t$ from $g_1$.
Let $m:\Sigma\rightarrow \mathbb{R}$ be a positive smooth function such that
\begin{equation*}
m(x)=\dfrac{H_{g_1}(x)-H(x)}{n-1}.
\end{equation*}
Let $\lam$ be a smooth function on $[-t_0,0]$ such that $\lam(0)=0$, $\lam'(0)=-1$. Let $g_2$ be a metric on $\Sigma\times [-t_0, 0]$ with
$$g_2(x,t)=dt^2+(1+m(x)\lam(t))^2\bar g_1(x,t).$$
and extend $g_2$ on the whole $\Omega$. Then we immediately obtain $g_2|_{\Sigma}=\gamma$  and $\frac{d}{dt}|_{\Sigma}$ is the outer normal vector. Let $\bar g_2(x,t)=(1+m(x)\lam(t))^2\bar g_1(x,t)$, then it is the induced metric on $\Sigma_t$ from $G_2$.

Let $A_i(t)$ and $H_i(t)$ denote the second fundamental form and the mean curvature of $\Sigma_t$ with respect to $g_i$ respectively, $i=1,2$. Then we have
$$A_2=(1+m\lam)^2A_1+m\lam'(1+m\lam)\bar g_1,$$
and
$$H_2=H_1+\dfrac{(n-1)m\lam'}{1+m\lam}.$$
Moreover, $H_{g_2}=H_2(0)=H$.

Let $\bar R_i(t)$ denote the scalar curvature of $\Sigma_t$ with respect to $\bar g_i$, $i=1,2$.
By a simply computation we have
$$\bar R_2=(1+m\lam)^{-2}\left(\bar R_1-\dfrac{2(n-1)m\lam}{1+m\lam}\Delta_{\bar g_2}m-\dfrac{(n-1)(n-4)\lam^2}{(1+m\lam)^2}|\nabla_{\bar g_2}m|^2\right).$$

By Gauss equation,
$$R_{g_i}=\bar R_i-2\dfrac{\p H_i}{\p t}-|H_i|^2-|A_i|^2~~(i=1,2).$$
Then
\begin{equation*}
\begin{split}
R_{g_2}=&R_{g_1}-2\dfrac{\p (H_2-H_1)}{\p t}+(\bar R_2-\bar R_1)-(|H_2|^2-|H_1|^2)-(|A_2|^2-|A_1|^2)\\
=&R_{g_1}-\dfrac{2(n-1)m\lam''}{1+m\lam}-\dfrac{(n-1)(n-2)m^2\lam'^2}{(1+m\lam)^2}-\dfrac{2(n-1)m\lam}{(1+m\lam)^3}\Delta_{\bar g_2}m\\
&-\dfrac{(n-1)(n-4)\lam^2}{(1+m\lam)^4}|\nabla_{\bar g_2}m|^2-\dfrac{m\lam(2+m\lam)}{(1+m\lam)^2}\bar R_{g_1}-\dfrac{2nm\lam'}{1+m\lam}H_1.
\end{split}
\end{equation*}

Note that $H_1$, $\bar R_1$, $\Delta_{\bar g_2}m$ and $|\nabla_{\bar g_2}m|^2$ are bounded in $[-t_0,0]$. Choose $\lam''(t)\ll-1$ in small interval $[-t_1,0]$ around $t=0$, then $R_{g_2}>0$ in this small interval, here $t_1<t_0$.

By the perturbation and above construction, there exist some $\epsilon_1$ such that $R_{g_1}\ge S+\epsilon_1$ in $\Omega$, and \\
$R_{g_2}\ge S+\epsilon_1$ in $\Sigma\times [-t_1, 0]$. Then apply Theorem 5 in \cite{BMN}, we can get a new metric $\tilde g$, with $R_{\tilde g}>S$, $\tilde g|_{\p \Omega}=\gamma$ and $H_{\tilde g}=H$, which is the desire metric.
\end{proof}

By Lemma \ref{DBH2}, we immediately have
\begin{corollary}
If $(S^{n-1}, \gamma_1, H_1)$ is  NNSC-cobordant to $(S^{n-1}, \gamma_2, H_2)$, then $(S^{n-1}, \gamma_1, H_1)$ is  NNSC-cobordant to $(S^{n-1}, \gamma_2, H)$ for any $H<H_2$.
\end{corollary}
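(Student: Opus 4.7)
The plan is a direct application of Proposition \ref{DBH2}. Let $(\Omega^n,g)$ be the given NNSC cobordism, viewed as a compact manifold with boundary $\partial\Omega = \Sigma_1\sqcup \Sigma_2$. I would define the target boundary mean curvature $\tilde H$ on $\partial\Omega$ by setting $\tilde H|_{\Sigma_1} = H_1$ and $\tilde H|_{\Sigma_2} = H$. Since $H<H_2$, one has $H_g\ge \tilde H$ pointwise on $\partial\Omega$ with strict inequality on $\Sigma_2$, and $R_g\ge 0$ by the NNSC hypothesis. The goal is to produce a metric $\tilde g$ on $\Omega$ with $R_{\tilde g}\ge 0$, $\tilde g|_{\partial\Omega} = g|_{\partial\Omega}$, and $H_{\tilde g}=\tilde H$; this $\tilde g$ then supplies precisely the desired NNSC cobordism of $(S^{n-1},\gamma_1,H_1)$ and $(S^{n-1},\gamma_2,H)$.

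I would follow the two-step scheme in the proof of Proposition \ref{DBH2}. First, carry out the conformal perturbation $g_1=w_\epsilon^{4/(n-2)}g$, where $w_\epsilon$ solves $\Delta w_\epsilon-\frac{1}{c(n)}\eta R_g w_\epsilon=-\epsilon$ with $w_\epsilon|_{\partial\Omega}=1$ and the cut-off $\eta$ is supported in a small interior ball disjoint from $\partial\Omega$. This yields $R_{g_1}>0$ throughout $\Omega$ even when $R_g\equiv 0$, since on the region where $\eta=0$ one gets $R_{g_1}=c(n)\epsilon w_\epsilon^{-(n+2)/(n-2)}>0$; by interior regularity and the choice of $\eta$, the boundary mean-curvature shift $H_{g_1}-H_g=\frac{c(n)}{2}\partial_\nu w_\epsilon$ can be made arbitrarily small on $\partial\Omega$ by taking $\epsilon$ small. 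Second, apply the collar bending $g_2=dt^2+(1+m\lambda(t))^2\bar g_1(t)$ near $\partial\Omega$, with $m=(H_{g_1}-\tilde H)/(n-1)$ and $\lambda''\ll-1$, followed by the BMN smoothing as in the proof of Proposition \ref{DBH2}. On $\Sigma_2$, $m$ is bounded away from $0$ near $(H_2-H)/(n-1)>0$, so the bending produces the scalar-curvature boost that overwhelms the other terms in the expression for $R_{g_2}$; on $\Sigma_1$, $m$ is small, so its contribution to $R_{g_2}$ is dominated by the strictly positive $R_{g_1}$ already obtained from the conformal step.

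The main obstacle is that Proposition \ref{DBH2} as stated requires the strict inequality $H_g>\tilde H$ throughout $\partial\Omega$, which fails on $\Sigma_1$ in the present setting. Inspecting the proof, however, strict inequality is invoked only to guarantee $m$ is strictly positive on regions where a scalar-curvature boost is required; on the portion of $\partial\Omega$ where $H_g=\tilde H$ already holds, one may take $m\equiv 0$ there, and the bending formula reduces to the identity in a neighborhood of that portion, preserving both $R\ge 0$ and the existing mean curvature. Hence the construction of Proposition \ref{DBH2} adapts verbatim to our setting and delivers the corollary.
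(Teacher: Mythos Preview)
Your approach is exactly the paper's: the paper simply writes ``By Lemma~\ref{DBH2}, we immediately have'' and supplies no further argument. You go further than the paper by noticing that the hypothesis $H_g>H$ of Proposition~\ref{DBH2} is \emph{not} satisfied on $\Sigma_1$, and you attempt to repair this. That observation is correct and worth making.

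There is, however, a sign problem in your second paragraph. If you take $\eta\equiv 0$ (or if $R_g\equiv 0$, so that $\eta R_g\equiv 0$), the equation $\Delta w_\epsilon=-\epsilon$ with $w_\epsilon|_{\partial\Omega}=1$ forces $w_\epsilon\ge 1$ in $\Omega$ and hence $\partial_\nu w_\epsilon<0$ with respect to the \emph{outward} normal of $\Omega$. On $\Sigma_1$ the outward mean curvature of $(\Omega,g)$ is $-H_1$, so after the conformal step it becomes $-H_1+\tfrac{c(n)}{2}\partial_\nu w_\epsilon<-H_1$, and therefore $m=(H_{g_1}-\tilde H)/(n-1)<0$ there. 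With the single choice $\lambda''\ll -1$ you propose, the dominant bending term $-\dfrac{2(n-1)m\lambda''}{1+m\lambda}$ on the $\Sigma_1$ collar is then large \emph{negative}, not small; it is certainly not dominated by $R_{g_1}\sim c(n)\epsilon$, and $R_{g_2}$ can be driven below zero. Your third-paragraph alternative, taking $m\equiv 0$ on $\Sigma_1$, is incompatible with the conformal step you already performed, which moved the mean curvature off $H_1$.

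The repair is easy once the sign is tracked. Since $\Sigma_1$ and $\Sigma_2$ are disjoint, one may use independent cutoff profiles $\lambda$ on the two collars: keep $\lambda''\ll -1$ near $\Sigma_2$ (where $m>0$), but take $\lambda''\gg 1$ near $\Sigma_1$ (where $m<0$), so that $-2(n-1)m\lambda''>0$ in both collars. Alternatively, when $R_g>0$ at some interior point one may follow the proof of Proposition~\ref{DBH2} verbatim with $\eta$ supported there and $\epsilon$ small: then $w_0\le 1$, Hopf gives $\partial_\nu w_0>0$ on all of $\partial\Omega$, and hence $m>0$ on $\Sigma_1$ as well, so no sign adjustment is needed.
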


Moreover, joint with the above twice conformal deformations, we have
\begin{corollary}\quad
Let ${\cal{G}} = (g_-, g_+)$ be a metric with corners along $\Sigma$ on $M$. If $H(\Sigma, g_-)\ge H(\Sigma, g_+)$, both $R_{g_-}$ and $R_{g_+}$ are nonnegative, but they don't vanish at the same time. Then we can find a smooth nonnegative scalar curvature metric $g$ on $M$ such that $g|_{\p M}=g_-|_{\p M}$ and $H_g(\p M)=H_{g_-}(\p M)$.
\end{corollary}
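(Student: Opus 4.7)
The plan is to combine the corner-smoothing of Proposition \ref{Smoothing1} with the two-step conformal deformation procedure already described in the paragraphs preceding Proposition \ref{DBH2}, and then to invoke Proposition \ref{DBH2} to pin the boundary mean curvature to the prescribed value. First, apply Proposition \ref{Smoothing1} to produce a family of $C^2$ metrics $\{g_\delta\}_{0<\delta\le\delta_0}$ that agree with $\mathcal{G}$ outside a thin tube around $\Sigma$; in particular $g_\delta$ coincides with $g_-$ on a neighborhood of $\p M$, so $g_\delta|_{\p M}=g_-|_{\p M}$ and $H_{g_\delta}|_{\p M}=H_{g_-}|_{\p M}$ hold automatically. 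The hypothesis $H(\Sigma,g_-)\ge H(\Sigma,g_+)$ confines the negative part $R_{g_\delta-}$ to a strip of width $O(\delta^2)$ with uniformly bounded magnitude, so $\|R_{g_\delta-}\|_{L^{n/2}}\to 0$ as $\delta\to 0$.

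Second, I run two successive conformal deformations. The first, equation \eqref{conformal2} solved via Lemma \ref{CD2}, yields $u_\delta\ge 1$ with $u_\delta|_{\p M}=1$; setting $\tilde g_\delta=u_\delta^{4/(n-2)}g_\delta$ gives $R_{\tilde g_\delta}\ge 0$, while the hypothesis that $R_{g_-}$ and $R_{g_+}$ do not simultaneously vanish ensures $R_{\tilde g_\delta}>0$ on some open set $V$ disjoint from the smoothing strip. The boundary mean curvature drops: $H_{\tilde g_\delta}|_{\p M}=\frac{c(n)}{2}\p_\nu u_\delta+H_{g_-}\le H_{g_-}$. The second deformation, equation \eqref{conformal with eta}, uses a cutoff $\eta$ compactly supported in a small $U_p\subset V$ disjoint from $\p M$, and produces a positive solution $w$; setting $\hat g_\delta=w^{4/(n-2)}\tilde g_\delta$ gives $R_{\hat g_\delta}=w^{-4/(n-2)}(1-\eta)R_{\tilde g_\delta}\ge 0$, strictly positive on $V\setminus\mathrm{supp}(\eta)$. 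The maximum principle yields $\p_\nu w>0$ on $\p M$, while Lemma \ref{CT} together with Remark \ref{Interpolation} gives $\p_\nu u_\delta\to 0$ uniformly on $\p M$; taking $\delta$ small therefore forces $H_{\hat g_\delta}|_{\p M}=\frac{c(n)}{2}(\p_\nu u_\delta+\p_\nu w)+H_{g_-}>H_{g_-}$.

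Finally, I apply Proposition \ref{DBH2} to $(M,\hat g_\delta)$ with target boundary data $\gamma=g_-|_{\p M}$, $H=H_{g_-}|_{\p M}$, and $S=0$: condition 1 holds (NNSC with a point of strict positivity) and $H_{\hat g_\delta}>H$ on $\p M$, so the proposition delivers a smooth metric $g$ on $M$ with $R_g\ge 0$, $g|_{\p M}=g_-|_{\p M}$, and $H_g(\p M)=H_{g_-}(\p M)$, as desired. The main delicate point is the combined-sign estimate $\p_\nu u_\delta+\p_\nu w>0$ on $\p M$: the positive jump from the second, cutoff-supported deformation must dominate the mean-curvature loss from the first, which is ensured only once $\delta$ is small enough that the gradient of $u_\delta$ on $\p M$ becomes negligible. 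Every other ingredient is a direct citation of the lemmas and constructions already established in this section.
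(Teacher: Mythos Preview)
Your proposal is correct and follows essentially the same route as the paper, which simply records this corollary as a consequence of the smoothing procedure (Proposition \ref{Smoothing1}), the twice conformal deformation described immediately before Proposition \ref{DBH2}, and Proposition \ref{DBH2} itself. One minor imprecision: the support of $R_{g_\delta-}$ lies in the full strip $\Sigma\times(-\tfrac{\delta}{2},\tfrac{\delta}{2})$ of width $O(\delta)$, not $O(\delta^2)$, but since $|R_{g_\delta}|=O(1)$ there this still gives $\|R_{g_\delta-}\|_{L^{n/2}}\to 0$ and your argument is unaffected.
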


\subsection{Extension of cobordism}
 The following extending proposition implies that once we have an NNSC-cobordism of $(S^{n-1}, \gamma_i, H_i)$, $i=1,2$, we will have a cobordism of round spheres with scalar bounded below. This is very useful in the proof of nonexistence of NNSC-cobordism for some Bartnik data.

 \begin{proposition}\label{extend}\quad
Suppose Bartnik data $(S^{n-1}, \gamma_i, H_i)$ satisfies $H_i>0, i=1,2$,  $(S^{n-1}, \gamma_1, H_1)$ is NNSC-cobordant to $(S^{n-1}, \gamma_2, H_2)$ and their cobordism denoted by  $(\Omega_0, g_0)$, then we can extend it to a cobordism $(\Omega_1, g_1)$  of $(S^{n-1}, \tilde\gamma_1, \tilde H_1)$ and $(S^{n-1}, \tilde\gamma_2, \tilde H_2)$,  where $\tilde\gamma_1$ and $\tilde\gamma_2$ are round metrics, $\tilde H_1$ and $\tilde H_2$ are positive constants, and
\begin{description}
\item[(1)] $(S^{n-1}, \tilde\gamma_1, \tilde H_1)$ is cobordant to $(S^{n-1}, \tilde\gamma_2, \tilde H_2)$;
\item[(2)]  The scalar curvature $R_{g_1}\geq -C$,   $C$ is a constant depends only on $\gamma_1$, $\gamma_2$;
\item[(3)] For any fixed $(S^{n-1}, \tilde\gamma_2)$,  $\tilde H_2$ can be arbitrarily large provided $H_2$ is large enough;
\item[(4)] $\tilde H_i$ depends only on $H_i$, $\gamma_i$ and $\tilde \gamma_i$, $i=1,2$.
\item[(5)] $\Omega_0\subset \Omega_1$;
\item[(6)] When both $\gamma_1$ and $\gamma_2$ are isotopic to the standard sphere metric $\gamma_{std}$ in
$$\mathcal{M}_{psc}(S^{n-1})=\left\{\gamma|\gamma ~~\text{is a smooth metric on} ~~S^{n-1} ~~\text{and}~~ R_\gamma>0\right\},$$
 then the scalar curvature $R_{g_1}\geq 0$.
\end{description}
\end{proposition}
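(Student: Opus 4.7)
The plan is to glue explicit collar cylinders $C_1$ and $C_2$ onto $\Omega_0$ along $\Sigma_1$ and $\Sigma_2$ so that the outer boundaries become round spheres of constant positive mean curvature. For each side, I construct $C_i = S^{n-1} \times [0, L_i]$ with a warped-product-type metric $dt^2 + \phi_i(t)^2 \gamma_i(s(t))$, where $\gamma_i(s)$, $s \in [0,1]$, is a smooth path of Riemannian metrics on $S^{n-1}$ joining $\gamma_i(0) = \gamma_i$ to a fixed round metric $\gamma_i(1) = \tilde\gamma_i$. In case (6), by the PSC isotopy hypothesis, this path can be chosen entirely in $\mathcal{M}_{psc}(S^{n-1})$, so that $R_{\gamma_i(s)} > 0$ along the path.

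The reparametrization $s(t)$ and the warping factor $\phi_i(t)$ are then selected so that: near $t = 0$ the collar is a product $dt^2 + \gamma_i$, which makes the corner condition of Proposition \ref{Smoothing1} automatic at the interface with $\Omega_0$ (since the inward mean curvature from $C_i$ vanishes while $H_i > 0$); near $t = L_i$ the collar is a pure warped product $dt^2 + \phi_i(t)^2 \tilde\gamma_i$ with $\phi_i(L_i)$ equal to the prescribed radius of $\tilde\gamma_i$ and $(n-1)\phi_i'(L_i)/\phi_i(L_i) = \tilde H_i$; and the scalar curvature of $g_{C_i}$ is bounded below by a constant depending only on $\gamma_i$ and $\tilde\gamma_i$. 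To realize condition (3) with $\tilde H_2$ arbitrarily large, I interpose a scalar-flat outer cap modeled on a portion of the (possibly negative-mass) Schwarzschild manifold; on such a cap every coordinate sphere has a round induced metric, the outward mean curvature of the outermost sphere covers all of $(0, \infty)$ as the mass parameter varies, and $R \equiv 0$, so the large outer mean curvature is achieved without degrading the scalar-curvature lower bound on $C_i$.

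Gluing $C_1$ and $C_2$ to $\Omega_0$ produces a metric with corners along $\Sigma_1$ and $\Sigma_2$ satisfying the mean-curvature inequality of Proposition \ref{Smoothing1}. I smooth the corners and then invoke the conformal deformations of Lemmas \ref{CD2} and \ref{CT} (or the $R \geq -c(n)(C+1)$ variant of the final remark of Section 2.2 in the general case) to obtain a smooth metric $g_1$ on $\Omega_1 := \Omega_0 \cup C_1 \cup C_2$ with $R_{g_1} \geq -C$ depending only on $\gamma_1, \gamma_2$, and with $R_{g_1} \geq 0$ under the PSC isotopy hypothesis of (6). Because the smoothing and the conformal deformations act only in neighborhoods of the corners and leave the outer product regions near $\{t = L_i\}$ untouched, the new outer boundaries retain exactly the prescribed round metrics $\tilde\gamma_i$ and constant mean curvatures $\tilde H_i$, which yields (1), (4), (5), (6); (5) is also immediate from the gluing. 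The main obstacle is to arrange the scalar-curvature bound in (2) in a manner that is genuinely uniform in $\tilde H_2$ so as to be compatible with (3); this is what forces the use of the scalar-flat Schwarzschild cap rather than an arbitrary warped-product outer piece.
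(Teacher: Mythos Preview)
Your proposal has two genuine gaps.

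\textbf{The corner at $\Sigma_1$.} With $C_1$ a product $dt^2+\gamma_1$ near the interface, the mean curvature from the $C_1$ side (with respect to the normal pointing into $\Omega_0$) is $0$, while from the $\Omega_0$ side the same normal is the \emph{inward} normal of $\Omega_0$, so the mean curvature there is $H_1>0$. The jump is $H_--H_+=0-H_1<0$, and Proposition~\ref{Smoothing1} then produces a large \emph{negative} scalar-curvature spike, not a controllable one. The two ends of a cobordism are not symmetric in the normal conventions; the product trick works at $\Sigma_2$ but not at $\Sigma_1$. Fixing this by making $C_1$ strictly convex with $(n-1)\phi_1'(0)\ge H_1$ forces the scalar-curvature lower bound on $C_1$ to depend on $H_1$, contradicting the requirement in~(2) that $C$ depend only on $\gamma_1,\gamma_2$.

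\textbf{Condition (3).} A product collar at $\Sigma_2$ discards the information in $H_2$: the outer mean curvature of $C_2$ is determined by $\phi_2$ and $s$, not by $H_2$. Your Schwarzschild cap does not recover it. On a (negative-mass) Schwarzschild slab the sphere mean curvature $H(r)=(n-1)\sqrt{1-2m/r^{n-2}}/r$ is strictly decreasing in $r$, so the outermost sphere always has \emph{smaller} mean curvature than the innermost. To make $\tilde H_2$ large you would need the inner mean curvature of the cap, and hence the outer mean curvature of the preceding collar, to be even larger; but a warped product $dt^2+\phi^2\tilde\gamma_2$ with fixed $\phi(L_2)$ and large $\phi'(L_2)/\phi(L_2)$ has scalar curvature dominated by $-(n-1)(n-2)(\phi'/\phi)^2$, so no uniform bound survives.

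The paper avoids both issues by building each collar as a quasi-spherical metric $u^2\,dt^2+\bar\gamma(t)$ along a monotone path $\bar\gamma(t)$, with initial value $u(\cdot,0)=\bar H(0)/H_i$ chosen so that the interface mean curvature is \emph{exactly} $H_i$. This makes the corners match (Corollary~\ref{Smoothing2} applies), and it propagates the size of $H_2$: when $H_2$ is large the initial $u$ is small, the maximum principle keeps $u$ small on all of $[0,1]$, and $H_2'=u^{-1}(1)\bar H(1)$ is correspondingly large, while the scalar curvature of the quasi-spherical metric is the prescribed constant $K=\min R_{\bar\gamma(t)}$ depending only on $\gamma_i$. (A further minor point: the conformal deformations of Lemma~\ref{CD2} are solutions of a Dirichlet problem on all of $\Omega_1$ and do alter the metric near $t=L_i$; the paper absorbs this using Proposition~\ref{DBH2} rather than claiming the outer ends are untouched.)
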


\begin{remark}\quad
Note that  $\mathcal{M}_{psc}(S^{n-1})$ is path-connected when $n=3,4$(see \cite{Ma}), and in this case we can always get an extension of the given NNSC-cobordism with nonnegative scalar curvature.
\end{remark}

\begin{proof}[Proof of Proposition \ref{extend}]\quad
Our arguments are inspired by those in  \cite{SWW}. The proof have mainly three steps:
\begin{description}
  \item[(1)] Extend the cobordism such that each boundary metric is round.
  \item[(2)] Smooth the corners and keep the boundary unchanged.
  \item[(3)] Deform the mean curvature of each boundary to be a constant.
\end{description}

We first show $(S^{n-1}, \gamma_2, H_2)$ can be cobordant to some $(S^{n-1}, \tilde \gamma_2, H'_2)$, where $\tilde \gamma_2$ is a round metric. As in \cite{SWW}, let $\gamma(t)=t\gamma_2+(1-t)\gamma_{std}$ for $t \in [0,1]$. Choose a constant $a_2>0$ large
enough so that $e^{2a_2t_2}\gamma(t_2)> e^{2a_2t_1}\gamma(t_1)$ for any $0\le t_1\le t_2\le 1$, and denote
$\bar \gamma(t)=e^{2a_2t}\gamma_{t}$. Let $K=\min_{S^{n-1}}R_{\bar\gamma(t)}$  which depends only on $\gamma_2$, $\bar A(t)$ and
$\bar H(t)$ denote the second fundamental form and the mean curvature of $\Sigma(t)$ induced from the metric $\bar g= dt^2 + \bar \gamma(t)$. Since $\bar \gamma(t)$ strictly monotonically increases, $\bar A(t)>0$,  i.e., $\bar \gamma(t_1)-\bar \gamma(t_2)$ is positive definite for $t_1>t_2$ and $\bar A(t)$ is positive definite. It follows that $\bar H(t)>0$ and $\bar H(t)^2-  |\bar A(t)|^2>0$.

Consider
the quasi-spherical metric equation
\begin{equation}\label{QS'}
\left\{
\begin{split}
\bar H(t)\dfrac{\p u}{\p t} &=u^2\Delta_{\bar \gamma(t)} u+\dfrac{1}{2}(u-u^3)R_{\bar \gamma(t)}-\dfrac{1}{2}u R_{\bar g}+\dfrac{1}{2}u^3 K\\
u(x,0) &=\dfrac{\bar H(0)}{H_2}>0,
\end{split}
\right.
\end{equation}
Note $R_{\bar \gamma(t)}-K\ge 0$, and by maximum principle, we see that the above equation \eqref{QS'} has solution on whole $[0,1]$, and $\|u\|_{C^0(S^{n-1}\times[0,1])}$ is very small if $H_2$ is large enough.  Then $$(D_2=S^{n-1} \times [0,1], dS^2_2=u^2dt^2+\bar \gamma)$$  gives a cobordism  of $(S^{n-1}, \gamma_2, H_2)$ and $(S^{n-1}, \gamma'_2, H'_2)$ with the scalar curvature having a lower bound depends only on $\gamma_2$, $H_2$, here $\gamma'_2=e^{2a_2}\gamma_{std}$ and $H'_2= u^{-1}(1) \bar H(1)$. Hence, $H'_2$ can be arbitrarily large provided $H_2$ is large enough.

By the similar arguments, we can show $(S^{n-1}, \tilde \gamma_1, H'_1)$ is NNSC-cobordant to $(S^{n-1}, \gamma_1, H_1)$, where $\tilde \gamma_1$ is a round metric and $H'_1$ is given by solving the relevant quasi-spherical metric equation, the cobordism is denoted by $(D_1, dS^2_1)$.

Now glue $(D_1, dS^2_1)$, $(\Omega_0, g_0)$ and $(D_2, dS^2_2)$ one by one and then smooth each corner along $(S^{n-1}, \gamma_i)$ and $(S^{n-1}, \gamma'_i)$ by Proposition \ref{Smoothing1} (or Corollary \ref{Smoothing2}), then perform the twice conformal deformation and  together with  Proposition \ref{DBH2}(note here we do not need a definitely lower bound of the scalar curvature,  Proposition \ref{DBH2} is still valid), we can get a cobordism $(\Omega_1, g_1)$ of $(S^{n-1}, \tilde\gamma_1,  H'_1)$ and $(S^{n-1}, \tilde\gamma_2,  H'_2)$ with $R_g\ge -C(\gamma_1,\gamma_2)$. Here $R_g$ depends only on $\gamma_1,\gamma_2$, since the scalar curvature of $dS^2_i$ depends only on $\gamma_i$ for each $i=1,2$ by it's construction.

Then using Proposition \ref{DBH2} again, we can get a cobordism of $(S^{n-1}, \tilde \gamma_2, \min H'_2)$ and $(S^{n-1}, \tilde\gamma_1, \max H'_1)$, let $\tilde H_2=\min H'_2$ and $\tilde H_1=\max H'_2$. For convenience, we still denote the cobordism by $(\Omega_1, g_1)$.

When  $\gamma_2$ is isotopic to $\gamma_{std}$ in $\mathcal{M}_{psc}(S^{n-1})$, by Proposition 2.1 in \cite{CM}, we can find a smooth path $\gamma(t):[0,1]\rightarrow \mathcal{M}_{psc}(S^{n-1})$ such that $\gamma|_{t=0}=\gamma_1$, $\gamma(t)=\gamma_{std}$ for $t\in [\frac{5}{6},1]$ . Let $\bar \gamma(t)= (1+t^2)\gamma(t)$, $\bar A(t)$ and $\bar H(t)$ denote the second fundamental form and the mean curvature of the slice $\Sigma_t$ induced from the metric $\bar g= dt^2 + \bar \gamma(t)$.  Note that $R_{\bar \gamma(t)}\geq 2\delta_0>0$  on $S^{n-1}$ for all $t\in [0,1]$. Then consider the following quasi-spherical equation:
\begin{equation}\label{QS''}
\left\{
\begin{split}
\bar H(t)\dfrac{\p u}{\p t} &=u^2\Delta_{\bar \gamma(t)} u+\dfrac{1}{2}(u-u^3)R_{\bar \gamma(t)}-\dfrac{1}{2}u R_{\bar g}{+\dfrac{1}{2}\delta_0 u^3 }\\
u(x,0) &=\dfrac{\bar H(0)}{H_2}>0,
\end{split}
\right.
\end{equation}
it has global solution on $[0,1]$ since $R_{\bar \gamma(t)}\geq 2 \delta_0>0$. This gives a cobordism $(D_2, dS^2_2)$ of $(S^{n-1}, \gamma_2, H_2)$ and $(S^{n-1}, \tilde\gamma_2, H'_2)$ with $dS^2_2=(1+t^2)\gamma(t)+u^2dt^2$ and $R_{dS^2_2}=\delta_0>0$, where $\tilde\gamma_2=2\gamma_{std}$ and $H'_2= u^{-1}(1) \bar H(1)$. Similarly, we may construct NNSC-cobordism of $(D_1, dS^2_1)$ of $(S^{n-1}, \tilde\gamma_1, H'_1)$ and $(S^{n-1}, \gamma_1, H_1)$. Let $\tilde H_2=\min H'_2$ and $\tilde H_1=\max H'_1$.
Then perform the gluing and the conformal deformation \eqref{conformal2}, by Corollary \ref{Smoothing2} and Lemma \ref{CT}, and together with  Proposition \ref{DBH2},  we get the desired NNSC-cobordism of $(S^{n-1}, \tilde\gamma_1, \tilde H_1)$ and $(S^{n-1}, \tilde\gamma_2, \tilde H_2)$,  then we finish the proof of  Proposition \ref{extend}.
\end{proof}

\begin{tikzpicture}
\draw[dashed] (2,0) arc (0:180:2cm and 0.5cm);
\draw (2,0) arc (0:-180:2cm and 0.5cm);
\node[right](O) at (2,0) {$(S^{n-1}, \gamma_1, H_1)$};
\draw[dashed] (3,3) arc (0:180:3cm and 0.75cm);
\draw (3,3) arc (0:-180:3cm and 0.75cm);
\node[right](P) at (3,3) {$(S^{n-1}, \gamma_2, H_2)$};

\coordinate (A) at (-3,3);
\coordinate (B) at (3,3);
\coordinate (C) at (-2,0);
\coordinate (D) at (2,0);
\coordinate (E) at (-2.2,0);
\coordinate (F) at (-2.3,0.3);
\coordinate (G) at (-1.5,-1.5);
\coordinate (H) at (1.5,-1.5);
\coordinate (I) at (-3.5,4.5);
\coordinate (J) at (3.5,4.5);
\coordinate (K) at (-3.7,5.1);
\coordinate (L) at (3.7,5.1);
\coordinate (M) at (-1.3,-2.1);
\coordinate (N) at (1.3,-2.1);

\draw (C)--(A);
\draw (D)--(B);
\draw (C)--(G);
\draw (D)--(H);
\draw (A)--(I);
\draw (B)--(J);

\draw[dashed] (1.5,-1.5) arc (0:180:1.5cm and 0.375cm);
\draw (1.5,-1.5) arc (0:-180:1.5cm and 0.375cm);
\draw (3.5,4.5) arc (0:180:3.5cm and 0.875cm);
\node[right] at (3.5,4.5) {$(S^{n-1}, \tilde\gamma_2, \tilde H_2)$};
\draw (3.5,4.5) arc (0:-180:3.5cm and 0.875cm);
\node[right] at (1.5,-1.5) {$(S^{n-1}, \tilde\gamma_1, \tilde H_1)$};
\draw[dashed] (1.5,-1.5) arc (0:180:1.5cm and 0.375cm);
\draw (1.5,-1.5) arc (0:-180:1.5cm and 0.375cm);

\draw[->] (E)--node[left]{normal vector $\nu$}(F);
\node[below] at (0,-2.5) {\textbf{Figure 2}};
\end{tikzpicture}

The following proposition gives a way to construct NNSC cobordism of Bartnik data with round sphere and constant mean curvature, which is an existence result.
\begin{proposition}\label{constants}\quad
Suppose Bartnik data $(S^{n-1}, \gamma_i, H_i)$ satisfies that each $\gamma_i$ is a round metric with volume $V_i$ and each $H_i>0$ is a constant, $i=1,2$. Then each $(S^{n-1}, \gamma_i, H_i)$ can be regarded as an inner boundary of a Schwarzschild manifold with ADM mass $m_i$, $i=1,2$. If $V_1<V_2$, $m_1\le m_2$, then $(S^{n-1}, \gamma_1, H_1)$ and $(S^{n-1}, \gamma_2, H_2)$ admit an NNSC cobordism.
\begin{proof}\quad
Let $$r_i=\left(\dfrac{V_i}{\omega_{n-1}}\right)^{\frac{1}{n-1}},$$
where $\omega_{n-1}$ is the volume of the standard metric on $S^{n-1}$.

Let $g$ be a metric on $\Omega=S^{n-1}\times [r_1,r_2]$ such that
$$g=(1-\dfrac{2m(r)}{r^{n-2}})^{-1}dr^2+r^2\gamma_{std}$$
with $m(r_1)=m_1, m(r_2)=m_2$, and $m'(r)\ge 0$ on $[r_1, r_2]$.
Then we have $$g|_{S^{n-1}\times\{r_i\}}=\gamma_i,~~ H_g(S^{n-1}\times\{r_i\})=H_i,~~ i=1,2$$
and
$$R_g=\dfrac{2(n-1)m'(r)}{r^{n-1}}\ge 0.$$ This shows that $(\Omega, g)$ is the desire cobordism.
Moreover, when $m_1= m_2$, $(\Omega, g)$ is a part of a Schwarzschild manifold with ADM mass $m_1$.
 Note in this proof,  the ADM mass $m_1$ and $m_2$ of the Schwarzschild manifolds may be negative if the mean curvature $H_1$ and $H_2$ are large enough.
\end{proof}
\end{proposition}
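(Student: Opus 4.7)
The plan is to construct an explicit NNSC cobordism as a Schwarzschild-type warped product. Since each $\gamma_i$ is round, write $\gamma_i = r_i^2 \gamma_{std}$ with $r_i = (V_i/\omega_{n-1})^{1/(n-1)}$, so the hypothesis $V_1 < V_2$ translates into $r_1 < r_2$. On $\Omega = S^{n-1} \times [r_1, r_2]$ I will try the ansatz
\[
g = \left(1 - \frac{2m(r)}{r^{n-2}}\right)^{-1} dr^2 + r^2 \gamma_{std}
\]
for a smooth function $m(r)$ to be determined; the induced metric on the boundary slice $S^{n-1} \times \{r_i\}$ is automatically $r_i^2 \gamma_{std} = \gamma_i$, so the metric boundary conditions come for free.

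A short computation gives the mean curvature of $\Sigma_r = S^{n-1} \times \{r\}$ with respect to the $\partial_r$ direction, which serves both as the inward normal at $\Sigma_1$ and the outward normal at $\Sigma_2$, as $H(r) = (n-1)\sqrt{1 - 2m(r)/r^{n-2}}/r$. Imposing $H(r_i) = H_i$ therefore fixes
\[
m_i = \frac{r_i^{n-2}}{2}\left(1 - \left(\frac{H_i r_i}{n-1}\right)^2\right),
\]
which one recognizes as precisely the ADM mass of the Schwarzschild manifold realizing $(S^{n-1},\gamma_i,H_i)$ as inner boundary. The standard warped-product formula for this ansatz yields the particularly clean scalar curvature expression
\[
R_g = \frac{2(n-1)\, m'(r)}{r^{n-1}},
\]
so the NNSC requirement reduces to $m'(r) \geq 0$.

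To finish, I pick any smooth nondecreasing $m : [r_1,r_2] \to \mathbb{R}$ with $m(r_i) = m_i$; such a function exists precisely because $m_1 \leq m_2$, which is the hypothesis. The metric $g$ is well-defined provided $1 - 2m(r)/r^{n-2} > 0$ throughout $[r_1,r_2]$, and this holds at the endpoints by $H_i > 0$ (equivalently $2m_i < r_i^{n-2}$); globally it can be arranged by concentrating all the growth of $m$ in a small neighborhood of $r_2$ where $2m_2 < r^{n-2}$ by continuity. In the equality case $m_1 = m_2$ one may just take $m \equiv m_1$, recovering an honest Schwarzschild slab of mass $m_1$, as noted in the last sentence of the statement. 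I expect the warped-product scalar curvature calculation to be the only real substance here: the miraculous collapse to $2(n-1)m'/r^{n-1}$ is what makes the whole strategy work, and once this is in hand the monotone choice of $m$ and the well-definedness check are essentially immediate bookkeeping.
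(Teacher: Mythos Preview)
Your proof is correct and follows essentially the same approach as the paper: the same Schwarzschild-type ansatz on $S^{n-1}\times[r_1,r_2]$, the same scalar-curvature formula $R_g = 2(n-1)m'(r)/r^{n-1}$, and the same reduction to choosing a monotone interpolant $m$. You are in fact slightly more careful than the paper, which does not explicitly verify that $1 - 2m(r)/r^{n-2} > 0$ on the whole interval; your observation that one may concentrate the growth of $m$ near $r_2$ to guarantee this is a genuine (if minor) gap-filling addition.
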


\section{Proof the main theorems}
In this section, we prove the main theorems listed in the Introduction. In order to prove Theorem \ref{cobordism20}, we need the following

\begin{lemma}\label{ADM bound}\quad
Let $(M,g)$ be a 3-dimension asymptotic flat (AF) manifold with inner boundary $\partial M$ which is diffeomorphic to $S^2$, and $M$ is diffeomorphic to $\mathbb{R}^3\setminus \mathbb{B}_1$. The scalar curvature $R$ of $M$ is nonnegative and mean curvature $H$ of $\partial M$ is prescribed, then we have following
\begin{enumerate}
  \item $m_{ADM}(M)> 0$, when the Hawking mass $m_H(\partial M)>0$;
  \item $m_{ADM}(M)\ge m_H(\partial M)$, when the Hawking mass $m_H(\partial M)\le 0$.
\end{enumerate}
\end{lemma}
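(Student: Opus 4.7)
My approach would be to apply Huisken--Ilmanen's weak inverse mean curvature flow (IMCF) starting at $\partial M$, together with the Geroch monotonicity of the Hawking mass and its convergence to the ADM mass at infinity. Since $(M,g)$ is asymptotically flat with $R_g\geq 0$ and $\partial M\cong S^2$ is connected, the weak IMCF is well-defined from $\partial M$. Denoting the resulting family of leaves by $\{\Sigma_t\}_{t\geq 0}$, two classical inputs drive the argument: for $t>0$, $t\mapsto m_H(\Sigma_t)$ is monotone nondecreasing; and $m_H(\Sigma_t)\to m_{ADM}(M)$ as $t\to\infty$, because the leaves become large coordinate spheres. Combined, these give $m_{ADM}(M)\geq m_H(\Sigma^*)$, where $\Sigma^*$ is the strict minimizing hull of $\partial M$, i.e.\ the genuine starting surface after the initial jump of the weak flow.

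For case (1), I would analyze the jump from $\partial M$ to $\Sigma^*$ using the standard properties $|\Sigma^*|\leq|\partial M|$, $H\equiv 0$ on $\Sigma^*\setminus\partial M$, and $H$ unchanged on $\Sigma^*\cap\partial M$, so that
\[
\int_{\Sigma^*} H^2 \leq \int_{\partial M} H^2.
\]
The hypothesis $m_H(\partial M)>0$ rewrites as $\int_{\partial M}H^2<16\pi$, so also $\int_{\Sigma^*}H^2<16\pi$, and the factor $1-(16\pi)^{-1}\int_{\Sigma^*}H^2$ is strictly positive. Together with $|\Sigma^*|>0$, this yields $m_H(\Sigma^*)>0$, whence $m_{ADM}(M)\geq m_H(\Sigma^*)>0$.

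For case (2), the preceding argument fails because when $\int_{\partial M}H^2\geq 16\pi$ the area decrease $|\Sigma^*|\leq|\partial M|$ can actually push $m_H(\Sigma^*)$ below $m_H(\partial M)$, so the jump inequality does not give the claimed bound directly. I would instead prove the stronger statement $m_{ADM}(M)\geq 0$, which, together with $m_H(\partial M)\leq 0$, yields the desired inequality. To obtain $m_{ADM}(M)\geq 0$ despite the interior boundary, I would either fill $\partial M$ in by an NNSC cap (e.g.\ a Schwarzschild-type region matched to the boundary data) and invoke the boundaryless positive mass theorem on the resulting closed-up AF manifold whose ADM mass agrees with $m_{ADM}(M)$; or approximate $\partial M$ from the outside by outward-minimizing surfaces along which the weak IMCF starts smoothly, apply Geroch monotonicity directly, and pass to the limit.

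The delicate step, and thus the main obstacle, is case (2): the initial jump of weak IMCF does not preserve the Hawking mass in the nonpositive regime, so the approach of case (1) cannot be directly imitated, and one is forced into an auxiliary construction (fill-in or outward approximation) to reduce to the positive mass theorem.
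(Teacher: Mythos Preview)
Your treatment of case~(1) is correct and coincides with the paper's argument: pass to the strictly minimizing hull $\Sigma^*$, use $\int_{\Sigma^*}H^2\le\int_{\partial M}H^2$ to get $m_H(\Sigma^*)>0$, and then apply Huisken--Ilmanen.

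Your case~(2), however, contains a genuine error. First, your claim that ``the area decrease $|\Sigma^*|\le|\partial M|$ can actually push $m_H(\Sigma^*)$ below $m_H(\partial M)$'' is backwards. Write $a=1-\frac{1}{16\pi}\int_{\Sigma^*}H^2$ and $b=1-\frac{1}{16\pi}\int_{\partial M}H^2$; you already know $a\ge b$ and, in case~(2), $b\le 0$. If $a\ge 0$ then $m_H(\Sigma^*)\ge 0\ge m_H(\partial M)$. If $a<0$, then from $|\Sigma^*|^{1/2}\le|\partial M|^{1/2}$ and $a<0$ one gets $|\Sigma^*|^{1/2}a\ge|\partial M|^{1/2}a\ge|\partial M|^{1/2}b$, hence again $m_H(\Sigma^*)\ge m_H(\partial M)$. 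This is exactly what the paper does: the same jump-plus-monotonicity argument handles both cases, with the extra ingredient $|\Sigma^*|\le|\partial M|$ invoked only when the bracket is nonpositive.

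Second, and more seriously, the ``stronger statement'' $m_{ADM}(M)\ge 0$ that you propose to prove is \emph{false} under the stated hypotheses. Take the exterior region $\{r\ge 1\}$ in a Schwarzschild metric of mass $m<0$: it is asymptotically flat, diffeomorphic to $\mathbb{R}^3\setminus\mathbb{B}_1$, has $R\equiv 0$, and $m_{ADM}=m<0$. (One checks $m_H(\partial M)=m$ here, so case~(2) is sharp.) Your proposed fill-in step cannot succeed in this example: the inner boundary is a round sphere with mean curvature $2\sqrt{1-2m}>2$, and by Shi--Tam no NNSC fill-in with this boundary data exists. So the auxiliary construction you outline is not available in general, and the route through $m_{ADM}\ge 0$ must be abandoned.
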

\begin{proof}
As $(M^3, g)$ is AF,  we can find a minimizing hull $\Sigma$ of $\p M$ in $(M^3, g)$, and then by the inverse mean curvature flow, we see that $m_{ADM}(M)\ge m_H(\Sigma)$ ( see \cite{IH}) . By the property of  minimizing hull we have
\begin{equation*}
1-\dfrac{1}{16\pi}\int_{\Sigma} H^2 d\mu=1-\dfrac{1}{16\pi}\int_{\Sigma\cap \p M}H^2 d\mu\ge 1-\dfrac{1}{16\pi}\int_{\partial M} H^2 d\mu,
\end{equation*}

note that
$$m_H(\partial M)=\dfrac{|\Sigma|^{1/2}}{(16\pi)^{1/2}}(1-\dfrac{1}{16\pi}\int_{\partial M} H^2 d\mu),$$
when $m_H(\partial M)> 0$, then $m_{ADM}(M)\ge m_H(\Sigma)> 0$.
when $m_H(\partial M)\le 0$,  since $|\Sigma|\le |\p M|$, then $m_{ADM}(M)\ge m_H(\Sigma)\ge m_H(\partial M)$.
\end{proof}

With this lemma, we are in the position to prove the following

\begin{theorem}\label{cobordism2}\quad
Suppose Bartnik data $(S^2, \gamma_i, H_i)$ satisfies $H_i>0$, $i=1,2$, and the Gaussian curvature of $(S^2, \gamma_2)$ is positive, then they admit no trivial NNSC cobordism provided
\begin{equation}
m_{BY}(S^2, \gamma_2, H_2)<0\le m_H(S^2, \gamma_1, H_1).
\end{equation}
\end{theorem}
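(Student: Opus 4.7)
The plan is to derive a contradiction by attaching a Shi-Tam asymptotically flat extension to $\Sigma_2$, smoothing the resulting corner, and then comparing an upper bound on the ADM mass (from Shi-Tam's mass monotonicity) with a lower bound (from Lemma \ref{ADM bound}). Suppose for contradiction that a trivial NNSC cobordism $(\Omega,g)$ from $(S^2,\gamma_1,H_1)$ to $(S^2,\gamma_2,H_2)$ exists, so $\Omega\cong S^2\times[0,1]$. Since $\gamma_2$ has positive Gauss curvature, Nirenberg's solution of the Weyl problem embeds it isometrically into $\mathbb{R}^3$ as a convex surface with mean curvature $H_0$. Apply Shi-Tam's quasi-spherical construction to the flat exterior, conformally deforming the radial foliation so that the resulting asymptotically flat metric $g_{ext}$ has zero scalar curvature, boundary metric $\gamma_2$, and boundary mean curvature $H_2$, with the normal convention matching the outward normal of $\Omega$ at $\Sigma_2$. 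Gluing $(\Omega,g)$ to this extension along $\Sigma_2$ yields an asymptotically flat manifold $(M,\tilde g)$ diffeomorphic to $\mathbb{R}^3\setminus B_1$ with a single inner boundary $\Sigma_1$, nonnegative scalar curvature away from $\Sigma_2$, and equal mean curvatures across the corner $\Sigma_2$.

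Next I would smooth this $C^0$ metric. Because the two-sided mean curvatures at $\Sigma_2$ match exactly, Corollary \ref{Smoothing2} produces $C^2$ approximations $g_\delta$ whose scalar curvature is uniformly bounded from below and which coincide with $\tilde g$ outside an arbitrarily thin neighborhood of $\Sigma_2$ (in particular $g_\delta=\tilde g$ near $\Sigma_1$ and on the AF end). To remove the remaining small negative part of the scalar curvature I solve the conformal equation \eqref{conformal1} via Lemma \ref{CD1} and set $\hat g_\delta=u_\delta^{4/(n-2)}g_\delta$. By Lemma \ref{CT} and Remark \ref{Interpolation}, $u_\delta\to 1$ in $L^\infty(M)$ and $|\nabla u_\delta|\to 0$ on compact sets, so $R_{\hat g_\delta}\ge 0$ on $M$, the AF asymptotics are preserved, the induced metric on $\Sigma_1$ is still $\gamma_1$, and the mean curvature of $\Sigma_1$ in $\hat g_\delta$ converges to $H_1$. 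Consequently $m_{ADM}(\hat g_\delta)\to m_{ADM}(\tilde g)$ and $m_H(\Sigma_1,\hat g_\delta)\to m_H(S^2,\gamma_1,H_1)\ge 0$ as $\delta\to 0$.

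The two decisive estimates now close the argument. For the upper bound, Shi-Tam's mass monotonicity along the quasi-spherical foliation in $g_{ext}$ gives $m_{ADM}(g_{ext})\le m_{BY}(S^2,\gamma_2,H_2)<0$; since the ADM mass is determined by the asymptotic end alone, there is a fixed $c>0$ such that $m_{ADM}(\hat g_\delta)<-c$ for all sufficiently small $\delta$. For the lower bound, Lemma \ref{ADM bound} applied to the smooth AF manifold $(M,\hat g_\delta)$ with inner boundary $\Sigma_1$ yields, in either of its two cases, $m_{ADM}(\hat g_\delta)\ge -\epsilon$ for any preassigned $\epsilon>0$ once $\delta$ is small enough (using that $m_H(\Sigma_1,\hat g_\delta)\to m_H(S^2,\gamma_1,H_1)\ge 0$: if the limiting Hawking mass is positive then case 1 gives $m_{ADM}(\hat g_\delta)>0$, while if it equals $0$ case 2 gives $m_{ADM}(\hat g_\delta)\ge m_H(\Sigma_1,\hat g_\delta)\ge -\epsilon$). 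Choosing $\epsilon<c$ produces the contradiction.

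The main technical obstacle is the control of the smoothing and conformal deformation: one must verify that neither the AF asymptotic nor the Hawking mass of $\Sigma_1$ is destroyed by the perturbation. This is secured by the matching mean curvatures at the glued corner (so that Corollary \ref{Smoothing2} applies without the residual $(H^--H^+)$ term) together with the uniform convergences $u_\delta\to 1$ and $|\nabla u_\delta|\to 0$ furnished by Lemma \ref{CT} and Remark \ref{Interpolation}; once those convergences are in hand the Shi-Tam upper bound and the IMCF-based lower bound of Lemma \ref{ADM bound} directly conflict.
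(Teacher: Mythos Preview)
Your proposal is correct and follows essentially the same approach as the paper: assume a cobordism exists, attach the Shi--Tam asymptotically flat extension at $\Sigma_2$, smooth the matching-mean-curvature corner via Corollary~\ref{Smoothing2}, conformally deform to nonnegative scalar curvature using \eqref{conformal1} and Lemma~\ref{CT}, and then contrast the Shi--Tam upper bound $m_{ADM}\le m_{BY}<0$ with the lower bound from Lemma~\ref{ADM bound} after noting that $u_\delta\to 1$ and $|\nabla u_\delta|\to 0$ force $m_H(\Sigma_1,\hat g_\delta)\to m_H(S^2,\gamma_1,H_1)\ge 0$. Your case split at the end (positive versus zero limiting Hawking mass) is in fact slightly more explicit than the paper's concluding line, but the logic is identical.
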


\begin{proof}\quad
We argue it by contradiction, suppose $(S^2, \gamma_1, H_1)$ and $(S^2, \gamma_2, H_2)$ admit an NNSC cobordism $(\Omega_1,g_1)$.
By  \cite{ST}, we can construct an asymptotically flat end $(E,g)$ with $(S^2, \gamma_2)$ as the inner boundary, let $(\Sigma_0, h_0)$ be the isometric embedding image of  $(S^2, \gamma_2)$  in $\mathbb{R}^3$,  and $X$ be its
position vector,  and let $\nu$ be the unit outward normal of $\Sigma_0$ at $X$. Let $\Sigma_\rho$ be the convex hypersurface described by $Y = X+\rho \nu$, with $\rho\ge 0$. The Euclidean space outside $\Sigma_0$ can be represented by
$(\Sigma_0 \times (0,\infty), d\rho^2+h_\rho)$, $h_\rho$ is the induced metric on $\Sigma_\rho$.
Consider the following quasi-spherical equation:
\begin{equation} \label{QS}
\left\{
\begin{split}
H_0\dfrac{\p u}{\p \rho} &=u^2\Delta_\rho u+\dfrac{1}{2}(u-u^3)R_\rho\\
u(x,0) &=u_0(x)=\dfrac{H_0}{H_2}>0,
\end{split}
\right.
\end{equation}
where $R_\rho>0$ is the scalar curvature of $\Sigma_\rho$ and $H_0$ is the mean curvature of $\Sigma_\rho$ with respect to the metric $d\rho^2+h_\rho$.

By \cite{ST}, we know that equation \eqref{QS} has a unique solution such that:
\begin{description}
  \item[(1)] $u(z)=1+\dfrac{m_0}{\rho}+v,$ where $|v|=O(\rho^{-2})$ ,$|\nabla_0v|=O(\rho^{-3})$;
  \item[(2)] $g=u^2d\rho^2+h_\rho$ is asymptotically flat outside $\Sigma_0$ with scalar curvature $R=0$;
  \item[(3)] \begin{equation}\label{ADM-BY}
  16\pi m_{ADM}(g)=8\pi m_0=\lim_{\rho\rightarrow\infty}\int_{\Sigma_\rho}H_0(1-u^{-1})d\mu_\rho.
  \end{equation}
\end{description}

Moreover, due to \cite{ST}, we know the Brown-York mass on each leaf  is non-increasing, i.e,
$$m_{BY}(\Sigma_\rho, H(\rho))=\int_{\Sigma_\rho}(H_0-H(\rho))d\mu_\rho=\int_{\Sigma_\rho}H_0(1-u^{-1})d\mu_\rho$$
 is non-increasing. Then by \eqref{ADM-BY}, we have
\begin{equation}
 16\pi m_{ADM}(g)\le \int_{\Sigma_0}H_0(1-u_0^{-1})d\mu_0=m_{BY}(S^2,\gamma_2,H_2)< 0.
\end{equation}

Now we can glue $(\Omega_1,g_1)$ and $(E,g)$ along $(S^2, \gamma_2, H_2)$. Now apply Corollary \ref{Smoothing2} and the conformal deformation in section 2 by setting $g_+=g_1$ and $g_-=g$,  we can find a family of $C^2$ metrics $\tilde g_\delta=u_\delta^4 g_\delta$, such that $R_{\tilde g_\delta}\ge 0$, and
$$\lim_{\delta\rightarrow 0}m_{ADM}(\tilde g_\delta)=m_{ADM}(g)<0,$$
 i.e.,  for any $\epsilon>0$, there exists $\delta>0$, such that
 $$|m_{ADM}(\tilde g_\delta)-m_{ADM}(g)|\le \epsilon.$$

Note the mean curvature of $(S^2, \gamma_1)$ with respect to $\tilde g_\delta$ is

$$\tilde H_1=4\dfrac{\p u_\delta}{\p \nu}+H_1.$$

Then by Lemma \ref{CT} and Remark \ref{Interpolation},
\begin{equation}\label{hawking bound}
\begin{split}
m_H(\Sigma_1,\gamma_1, \tilde H_1) &=\dfrac{|\Sigma|^{1/2}}{(16\pi)^{1/2}}(1-\dfrac{1}{16\pi}\int_{S^2} \tilde H_1^2 d\mu_{\gamma_1})\\
&\ge -C\e+(1+\epsilon)m_H(\Sigma_1,\gamma_1,  H_1)\\
&\ge -C\e.
\end{split}
\end{equation}
where $C>0$ is a constant. Take $\delta$ sufficiently small, then $\epsilon$ is sufficiently small and we have $$m_H(\Sigma_1,\gamma_1, \tilde H_1)> m_{ADM}(\tilde g_\delta)$$ which is a contradiction to Lemma \ref{ADM bound}.

\end{proof}
\begin{tikzpicture}
\draw[dashed] (2,0) arc (0:180:2cm and 0.5cm);
\draw (2,0) arc (0:-180:2cm and 0.5cm);
\node[right](O) at (2,0) {$(S^2, \gamma_1, H_1)$};
\draw[dashed] (3,4) arc (0:180:3cm and 0.75cm);
\draw (3,4) arc (0:-180:3cm and 0.75cm);
\node[right](P) at (3,4) {$(S^2, \gamma_2, H_2)$};
\coordinate (A) at (-3,4);
\coordinate (B) at (3,4);
\coordinate (C) at (-2,0);
\coordinate (D) at (2,0);
\coordinate (E) at (-2.2,0);
\coordinate (F) at (-2.3,0.4);
\draw (C)--(A);
\draw (D)--(B);
\draw[->] (E)--node[left]{normal vector $\nu$}(F);
\draw[domain=-4.5:-3] plot(\x,{sqrt(3*(\x)^2-11)});
\draw[domain=3:4.5] plot(\x,{sqrt(3*(\x)^2-11)});
\node[above] at (0,6) {$(E,g)$};
\node[below] at (0,2) {$(\Omega_1,g_1)$};
\node[below] at (0,-1) {\textbf{Figure 3}};
\node[below] at (0,-1.5) { $(E,g)$ is asymptotic flat.};
\end{tikzpicture}

Next, we prove the following:

\begin{theorem}\label{2-d}\quad
Suppose Bartnik data $(S^2, \gamma_i, H_i)$ satisfies $H_i>0, i=1,2$. Then there is a  positive constant $\Lambda(\gamma_1,\gamma_2,H_1)$  depending only on $\gamma_1,\gamma_2,H_1$ such that if $(S^2, \gamma_1, H_1)$ is NNSC-cobordant to  $(S^2, \gamma_2, H_2)$, then
$$\int_{S^2} H_2 d\mu_{\gamma_2}\le \Lambda.$$
\end{theorem}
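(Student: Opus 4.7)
The plan is to argue by contradiction, combining Proposition \ref{extend} to reduce to round-sphere boundaries with the Shi-Tam asymptotically flat extension argument already used in the proof of Theorem \ref{cobordism2}. Suppose $(S^2,\gamma_1,H_1)$ is NNSC-cobordant to $(S^2,\gamma_2,H_2)$ via some $(\Omega_0,g_0)$. Apply Proposition \ref{extend} to extend this to an NNSC cobordism $(\Omega_1,g_1)$ between round spheres $(S^2,\tilde\gamma_1,\tilde H_1)$ and $(S^2,\tilde\gamma_2,\tilde H_2)$ with constant mean curvatures, where $\tilde\gamma_i$ and $\tilde H_1$ depend only on $\gamma_1,\gamma_2,H_1$ (via the fixed choices made in the construction), while $\tilde H_2$ is produced from $H_2$ by the quasi-spherical equation in the proof of that proposition.

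Since $\tilde\gamma_2$ is round and so has positive Gauss curvature, I glue a Shi-Tam asymptotically flat extension $(E,g_E)$ with $R_{g_E}=0$ to the outer boundary $(S^2,\tilde\gamma_2,\tilde H_2)$, and the same book-keeping as in the proof of Theorem \ref{cobordism2} yields
\begin{equation*}
16\pi\, m_{ADM}(g_E)\le \int_{S^2}(H_0-\tilde H_2)\,d\mu_{\tilde\gamma_2},
\end{equation*}
where $H_0$ is the (constant) mean curvature of the isometric embedding of $(S^2,\tilde\gamma_2)$ into $\mathbb{R}^3$. Smoothing the corner along $\tilde\gamma_2$ by Corollary \ref{Smoothing2} and performing the conformal deformation of Lemma \ref{CT} produces a smooth AF manifold with $R\ge 0$, ADM mass arbitrarily close to $m_{ADM}(g_E)$, and mean curvature at the inner boundary $\tilde\gamma_1$ arbitrarily close to $\tilde H_1$. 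By Lemma \ref{ADM bound} together with \eqref{hawking bound}, the ADM mass is bounded below by $m_H(\tilde\gamma_1,\tilde H_1)-O(\epsilon)$, a quantity depending only on $\gamma_1,H_1$ through the construction. Combining the two bounds gives $\tilde H_2\le\Lambda'(\gamma_1,\gamma_2,H_1)$.

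The remaining step is to translate this bound on the constant $\tilde H_2$ back to a bound on $\int_{S^2}H_2\,d\mu_{\gamma_2}$. The cobordism $(D_2,dS^2_2)$ built in the proof of Proposition \ref{extend} is modelled on Shi-Tam's quasi-spherical extension, so I expect the total mean-curvature deficit $\int_{\Sigma_t}(\bar H(t)-H(t))\,d\mu_{\bar\gamma(t)}$ to be monotone along the foliation; this would bound $\int_{S^2}H_2\,d\mu$ by $\int_{S^2}H'_2\,d\mu_{\bar\gamma(1)}$ plus boundary terms depending only on $\gamma_2$, and a Harnack-type comparison for the QS solution $u(\cdot,1)$ would give $\max H'_2\le C\min H'_2=C\tilde H_2$, controlling the remaining term by $\tilde H_2$. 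The main obstacle is exactly this last step: verifying the monotonicity and Harnack estimate for the modified QS equation of Proposition \ref{extend} (which carries an extra $u^3 K$ term relative to standard Shi-Tam), and handling the fact that Proposition \ref{extend}(2) only guarantees $R_{g_1}\ge -C(\gamma_1,\gamma_2)$ in general — so a supplementary conformal deformation (along the lines of the Remark following Lemma \ref{CD2}) is needed to restore nonnegative scalar curvature before Lemma \ref{ADM bound} can be invoked.
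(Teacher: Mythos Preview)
Your strategy runs into the two obstacles you yourself flag, and neither is easily repaired. First, the conformal deformation in the Remark you cite does \emph{not} restore nonnegative scalar curvature: it only takes a metric with corners satisfying $R\ge -C$ on each piece to a smooth metric with $R\ge -c(n)C$; producing $R\ge 0$ from $R\ge -C$ on a region of fixed positive volume is a Yamabe-type problem with no a~priori solution, and the Schoen--Yau small-$L^{n/2}$ trick does not apply here. So Lemma~\ref{ADM bound} is unavailable. Second, inverting the passage $H_2\mapsto\tilde H_2$ is genuinely problematic: the quasi-spherical equation in Proposition~\ref{extend} is parabolic \emph{forward} in $t$, so bounding $u(\cdot,1)$ from below (equivalently $\tilde H_2$ from above) gives no direct control on $u(\cdot,0)$, and Proposition~\ref{extend}(3) only asserts that $\tilde H_2\to\infty$ when $H_2$ is uniformly large, not that a bound on $\tilde H_2$ controls $\int H_2\,d\mu_{\gamma_2}$.

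The paper avoids both issues by a different, simpler construction. It applies Proposition~\ref{extend} \emph{only to the inner boundary}, producing a cobordism between a round $(S^2,\tilde\gamma_1,\tilde H_1)$ and the \emph{original} $(S^2,\gamma_2,H_2)$ with $R\ge -C(\gamma_1,\gamma_2)$. It then fills the round inner sphere with a geodesic ball in hyperbolic space $\mathbb{H}^3_{-\kappa_1^2}$ matching $(\tilde\gamma_1,\tilde H_1)$, glues, and smooths, obtaining a compact fill-in of $(S^2,\gamma_2,H_2)$ with $R\ge -6\kappa^2$, where $\kappa=\kappa(\gamma_1,\gamma_2,H_1)$ is chosen large enough that also $K_{\gamma_2}>-\kappa^2$. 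Now the \emph{hyperbolic} Shi--Tam positivity theorem \cite{ST2} applies directly and gives
\[
\int_{S^2}(H_0-H_2)\cosh(\kappa r)\,d\mu_{\gamma_2}\ge 0,
\]
with $H_0$ the mean curvature of the isometric embedding of $(S^2,\gamma_2)$ into $\mathbb{H}^3_{-\kappa^2}$. Since $\cosh\ge 1$, this immediately bounds $\int_{S^2} H_2\,d\mu_{\gamma_2}$. The two ideas you were missing are: use the hyperbolic rather than the flat Shi--Tam inequality, which tolerates the negative scalar-curvature lower bound; and leave the outer boundary untouched, so no back-translation from $\tilde H_2$ to $H_2$ is ever needed.
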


\begin{proof}\quad
For any NNSC cobordism $(\Omega_0, g_0)$  of $(S^2, \gamma_1, H_1)$ and $(S^2, \gamma_2, H_2)$, by Proposition \ref{extend} we can extend $(\Omega_0, g_0)$ to $(\Omega, g)$, the cobordism of $(S^2, \tilde\gamma_1, \tilde H_1)$ and $(S^2, \gamma_2, H_2)$ with $R_g\ge -C(\gamma_1,\gamma_2)$, where $\tilde\gamma_1$ is a round metric, $\tilde H_1$ is a positive constant(here we only need to extend the bottom face). For convenience we still denote the boundary data by $(S^2, \gamma_1, H_1)$ and $(S^2, \gamma_2, H_2)$.

 Choosing suitable $\kappa_1$ and $t_1$,  $(S^2, \gamma_1,  H_1)$ can be filled-in by a part of hyperbolic space $\mathbb{H}^3_{-\kappa_1^2}$ with metric $g_1=dt^2+\kappa_1^{-2}\sinh^2(\kappa_1 t) \gamma_{std}$, where $\gamma_{std}$ is the standard unit spherical metric on $S^2$, $ H_1=2\sqrt{1+\kappa_1^2r_1^2}/r_1$, and $r_1=\kappa_1^{-1}\sinh\kappa_1 t_1$ is the radius of $(S^2,\gamma_1)$. Denote the fill-in
by $(\Omega_1, g_1)$, now gluing $(\Omega, g)$ and $(\Omega_1, g_1)$ along $(S^2, \gamma_1)$, then we obtain a fill-in $(\Omega_2, g_2)$ of $(S^2, \gamma_2, H_2)$ with corners, where $\Omega_2=\Omega\cup\Omega_1$. Since the mean curvature of the two side of $(S^2, \gamma_1)$ are equal, by Proposition \ref{Smoothing1}, we can find some sufficiently small $\delta_0$, such that $g_{\delta_0}$ is a smooth metric on $\Omega_2$ and $R_{g_{\delta_0}}\ge -C(g,g_1)=-C(\gamma_1,\gamma_2,H_1)$ near $(S^2, \gamma_1)$. Let
$$\kappa=\max\{\kappa_1,\sqrt{C(\gamma_1,\gamma_2, H_1)/6}, \sqrt{C(\gamma_1,\gamma_2)/6},\sqrt{K_{\gamma_2-}}\},$$
where $K_{\gamma_2-}$ is the negative part of the Gaussian curvature of $(S^2,\gamma_2)$, then $(\Omega_2,g_{\delta_0})$ is a smooth manifold with scalar curvature $R_{g_{\delta_0}}\ge -6\kappa^2$.

Then $K_{\gamma_2}>-\kappa^2$ , hence $(S^2,\gamma_2)$ can be isometrically imbedded in $\mathbb{H}^3_{-\kappa^2}$. Then by \cite{ST2}, we have,
$$\int_{S^2} (H_0-H_2)\cosh \kappa r d\mu_{\gamma_2}\ge 0,$$
where $H_0$ is the mean curvature of isometric embedding image of $(S^2, \gamma_2)$ in $\mathbb{H}^3_{-\kappa^2}$ with the metric $dr^2+\kappa^{-2}\sinh^2 \kappa r\gamma_{std}$. Then
$$\int_{S^2} H_2\cosh \kappa r d\mu_{\gamma_2}\le \int_{S^2} H_0\cosh \kappa r d\mu_{\gamma_2},$$
take $\Lambda=\int_{S^2} H_0\cosh \kappa r d\mu_{\gamma_2}+1$ and note $\cosh \kappa r\ge 1$, we have
$$\int_{S^2} H_2 d\mu_{\gamma_2}\le \int_{S^2} H_2\cosh \kappa r d\mu_{\gamma_2}\le \Lambda,$$ this finishes the proof.

\end{proof}

\begin{tikzpicture}
\draw[dashed] (2,0) arc (0:180:2cm and 0.5cm);
\draw (2,0) arc (0:-180:2cm and 0.5cm);
\node[right](O) at (2,0) {$(S^2, \gamma_1, H_1)$};
\draw (3,3) arc (0:180:3cm and 0.75cm);
\draw (3,3) arc (0:-180:3cm and 0.75cm);
\node[right](P) at (3,3) {$(S^2, \gamma_2, H_2)$};
\coordinate (A) at (-3,3);
\coordinate (B) at (3,3);
\coordinate (C) at (-2,0);
\coordinate (D) at (2,0);
\coordinate (E) at (-2.2,0);
\coordinate (F) at (-2.3,0.3);
\draw (C)--(A);
\draw (D)--(B);
\draw[->] (E)--node[left]{normal vector $\nu$}(F);
\draw (-2,0) parabola bend (0,-3) (2,0);
\node[above] at (0,-2) {$(\Omega_1, g_1)$};
\node[above] at (0,1) {$(\Omega, g)$};
\node[below] at (0,-3.5) {\textbf{Figure 4}};
\node[below] at (0,-4) {$(\Omega_1, g_1)$ is a part of hyperbolic space glued to the coborsidm $(\Omega, g)$.};
\end{tikzpicture}

For the high dimensional case, we have

\begin{theorem}\label{highdimnoncob}\quad
Suppose Bartnik data $(S^{n-1}, \gamma_i, H_i)$ satisfies $H_i>0$, $i=1,2$, $3\leq n \leq 7$. Then there exists  a positive  constant $\Lambda(n,\gamma_1,\gamma_2,H_1)$ depending only on $n,\gamma_1,\gamma_2,H_1$ such that for any $H_2\ge \Lambda$,  $(S^{n-1}, \gamma_1, H_1)$ is not NNSC-cobordant to $(S^{n-1}, \gamma_2, H_2)$.
\end{theorem}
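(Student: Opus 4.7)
My plan is to argue by contradiction, following the $n=3$ proof of Theorem \ref{2-d} but replacing the three-dimensional Shi--Tam hyperbolic inequality by its higher-dimensional analogue. Suppose there exists a sequence $H_2^{(k)} \to \infty$ such that each $(S^{n-1}, \gamma_1, H_1)$ is NNSC-cobordant to $(S^{n-1}, \gamma_2, H_2^{(k)})$ via $(\Omega_0^{(k)}, g_0^{(k)})$. By Proposition \ref{extend}, each extends to a cobordism $(\Omega^{(k)}, g^{(k)})$ of round spheres $(S^{n-1}, \tilde\gamma_1, \tilde H_1)$ and $(S^{n-1}, \tilde\gamma_2, \tilde H_2^{(k)})$, where $\tilde\gamma_1, \tilde\gamma_2$, and $\tilde H_1$ depend only on the fixed initial data, $\tilde H_2^{(k)} \to \infty$ by item (3) of the proposition, and $R_{g^{(k)}} \ge -C_0$ with $C_0 = C_0(\gamma_1, \gamma_2)$.

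Next, I fill in the bottom end by a hyperbolic ball. Let $r_i$ denote the radius of the round metric $\tilde \gamma_i$, $i=1,2$, and choose $\kappa > 0$ large enough that $n(n-1)\kappa^2 \ge C_0$ and $(n-1)\sqrt{\kappa^2 + 1/r_1^2} \ge \tilde H_1$. Consider the geodesic ball $(B_{t_1}, g_{\mathbb H})$ in $\mathbb{H}^n_{-\kappa^2}$ of radius $t_1 = \kappa^{-1}\operatorname{arcsinh}(\kappa r_1)$, whose boundary is isometric to $(S^{n-1}, \tilde\gamma_1)$ and has constant mean curvature $\hat H_1 = (n-1)\sqrt{\kappa^2 + 1/r_1^2} \ge \tilde H_1$. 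Glue $(B_{t_1}, g_{\mathbb H})$ to $(\Omega^{(k)}, g^{(k)})$ along $(S^{n-1}, \tilde\gamma_1)$; since $\hat H_1 \ge \tilde H_1$, Proposition \ref{Smoothing1} smooths the corner, and the twice conformal deformation from Subsection 2.2 (applied via the Remark allowing a negative scalar curvature lower bound) produces a smooth compact manifold $(\hat\Omega^{(k)}, \hat g^{(k)})$ with $R_{\hat g^{(k)}} \ge -n(n-1)\kappa^2$ and boundary $(S^{n-1}, \tilde\gamma_2, \hat H_2^{(k)})$, where $\hat H_2^{(k)} \to \infty$ since the conformal corrections perturb $\tilde H_2^{(k)}$ by an amount controlled only by fixed data.

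Finally, since $\tilde\gamma_2$ has positive sectional curvature $1/r_2^2$, it embeds isometrically in $\mathbb{H}^n_{-\kappa^2}$ as a convex geodesic sphere with constant mean curvature $\hat H_0 = (n-1)\sqrt{\kappa^2 + 1/r_2^2}$. Applying the higher-dimensional hyperbolic Shi--Tam inequality---obtained by solving the hyperbolic quasi-spherical equation with initial value $\hat H_0/\hat H_2^{(k)}$ to glue an asymptotically hyperbolic end onto $(\hat\Omega^{(k)}, \hat g^{(k)})$ whose mass is bounded above by a weighted total mean curvature difference on the boundary, and then invoking the positive mass theorem for asymptotically hyperbolic manifolds (valid for $3 \le n \le 7$)---yields
\[
\int_{S^{n-1}} \hat H_2^{(k)} \cosh(\kappa r)\, d\mu_{\tilde\gamma_2} \le \int_{S^{n-1}} \hat H_0 \cosh(\kappa r)\, d\mu_{\tilde\gamma_2},
\]
where $r$ is the geodesic distance in $\mathbb{H}^n_{-\kappa^2}$ from the center of the embedding. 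Since $\hat H_2^{(k)}$ is a constant tending to $\infty$ while the right-hand side depends only on $\tilde\gamma_2$ and $\kappa$, this is the desired contradiction.

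The main obstacle is the last step: establishing the hyperbolic Shi--Tam inequality in dimensions $4 \le n \le 7$. One has to ensure the quasi-spherical equation is globally solvable on $S^{n-1} \times [0, \infty)$ with the prescribed small initial data (which relies on the positivity of the intrinsic scalar curvature of the foliating geodesic spheres relative to the ambient hyperbolic curvature), verify that the resulting end is asymptotically hyperbolic with finite mass expressible as a limit of boundary integrals, and apply the AH positive mass theorem. The extension and conformal deformation steps are essentially parallel to the arguments already carried out in the proofs of Proposition \ref{extend} and Theorem \ref{2-d}.
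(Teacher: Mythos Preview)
Your overall strategy matches the paper's almost exactly: reduce to round spheres with constant mean curvature via Proposition~\ref{extend}, cap the bottom by a hyperbolic ball, smooth to obtain a fill-in of $(S^{n-1},\tilde\gamma_2,\tilde H_2)$ with $R\ge -n(n-1)\kappa^2$, then build an asymptotically hyperbolic end by the quasi-spherical method and appeal to an AH positive mass theorem. The one genuine difference is in how the last step is executed, and it resolves precisely the ``main obstacle'' you flag.

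You package the final step as a higher-dimensional hyperbolic Shi--Tam \emph{inequality} and worry about proving it. The paper sidesteps this entirely by exploiting the rotational symmetry already bought by Proposition~\ref{extend}. Since $\tilde\gamma_2$ is round and $\tilde H_2$ is constant, the initial value $u_0=H_0/\tilde H_2$ for the hyperbolic quasi-spherical equation on $S^{n-1}\times[\rho_2,\infty)$ is a constant strictly less than $1$ (taking $\Lambda=H_0+1$), and the equation reduces to an ODE with the closed-form solution
\[
u^2(\rho)=1-\frac{1}{1+c_0\sinh^{n-2}\rho\,\cosh^2\rho},\qquad c_0=\frac{u_0^2}{1-u_0^2}\cdot\frac{1}{\sinh^{n-2}\rho_2\,\cosh^2\rho_2}.
\]
From this one computes the asymptotic expansion of the resulting AH metric directly and reads off that the mass aspect is a \emph{negative constant} $-C(n)/c_0$. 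After gluing with the fill-in and smoothing the corner \`a la Bonini--Qing, one has an AH manifold with $R\ge -n(n-1)$ and pointwise negative mass aspect, contradicting the rigidity theorem of Andersson--Cai--Galloway. So no integral Shi--Tam inequality is needed; the explicit ODE computation plus the pointwise sign of the mass aspect suffice. Your route would also work if the higher-dimensional hyperbolic Shi--Tam inequality is available, but the paper's exploitation of the symmetry is more elementary and self-contained.
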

\begin{proof}\quad
We also argue it by contradiction. Suppose for any $\Lambda>0$, when $H_2\ge\Lambda$, we can find a cobordism $(\Omega_1, g_1)$. By Proposition \ref{extend}, we can assume $\gamma_1$ and $\gamma_2$ are round, $H_1$ and $H_2$ are positive constants.
 Then as in the proof of Theorem \ref{2-d}, and after smoothing the corners, we get a fill-in $(\Omega,g)$ of $(S^{n-1}, \gamma_2, H_2)$ with $R_{g}\ge -n(n-1)\kappa^2$, where $\kappa$ depends only on $\gamma_1$, $\gamma_2$ and $H_1$. By a suitable scaling, we may assume that $\kappa=1$.

Note $(S^{n-1}, \gamma_2)$ can be embedded in the standard hyperbolic space $\mathbb{H}^n$ with mean curvature $H_0=(n-1)\sqrt{1+r_2^2}/r_2$, where $r_2$ is the radius of $(S^{n-1}, \gamma_2)$ and $r_2=\sinh \rho_2$. Here the metric of $\mathbb{H}^n$ is given by $g_0=d\rho^2+(\sinh^2\rho)\gamma_{std}$.
Solve the following quasi-spherical equation:
\begin{equation} \label{QS-Hy}
\left\{
\begin{split}
H_\rho\dfrac{\p u}{\p \rho} &=u^2\Delta_\rho u+\dfrac{1}{2}(u-u^3)(R_\rho+n(n-1))\\
u_0&=\dfrac{H_0}{H_2}>0,
\end{split}
\right.
\end{equation}
we can get a unique solution $u$ and an asymptotically hyperbolic manifold $(\Omega', g')$ with an inner boundary $(\Sigma_2, \gamma_2,  H_2)$. The metric is given by $g'=u^2d\rho^2+\sinh^2\rho\gamma_{std}$ and the scalar curvature $R_{g'}=-n(n-1)$.
(see the work in \cite{ST} and \cite{WY}).

Take $\Lambda=H_0+1$, and note that  $H_2>\Lambda>H_0$, we have $u_0<1$ is a constant. Solve the equation \eqref{QS-Hy} with this initial data, we have
\begin{equation}\label{u}
u^2(\rho)=1-\dfrac{1}{1+c_0\sinh^{n-2}\rho\cosh^2 \rho}
\end{equation}
where
$$c_0=\dfrac{u_0^2}{1-u_0^2}\dfrac{1}{\sinh^{n-2}\rho_2\cosh^2 \rho_2},$$
is a constant.
Let
$$-\dfrac{1}{\sinh r}dr=ud\rho,$$

then we have
$$g=\sinh^{-2} r(dr^2+\sinh^2 r\sinh^2\rho(r)\gamma_{std}),$$
Now we compute the expansion of $\sinh^2 r\sinh^2\rho(r)$ near $r=0$ or equivalently at $\rho=+\infty$.
By \eqref{u}, we have
\begin{align}
&\lim_{\rho\rightarrow \infty}(u(\rho)-1)e^{n\rho}=-\frac{C(n)}{c_0}\\
&\lim_{\rho\rightarrow \infty}(U(\rho) -\rho)e^{n\rho}=\frac{C(n)}{nc_0},
\end{align}
where

 \begin{equation}\label{U-r}
U(\rho)=\ln \dfrac{e^r+1}{e^r-1},
\end{equation}

${C(n)}$ is a constant depend only on $n$.
Also by \eqref{U-r} and the L'Hospital law we have
\begin{align}
&\sinh U(\rho) \sinh r=1,\\
&\lim_{\rho\rightarrow \infty}\dfrac{r}{e^{-\rho}}=\lim_{\rho\rightarrow \infty}\dfrac{2u(\rho)}{e^{U(\rho)-\rho}-e^{-U(\rho)-\rho}}=2.
\end{align}

Note $\rho\rightarrow \infty$ as $r\rightarrow 0$,
\begin{equation*}
\begin{split}
\lim_{r\rightarrow 0}\dfrac{\sinh^2 r\sinh^2\rho(r)-1}{r^n} &=\lim_{r\rightarrow 0}\dfrac{\sinh^{-2} U(\rho)\sinh^2\rho-1}{r^n}\\
&=\lim_{r\rightarrow 0}\dfrac{e^{2(\rho-U(\rho))}-1}{r^n}\\
&= \lim_{r\rightarrow 0}-\frac{C(n)}{nc_0}\dfrac{e^{-n\rho}}{r^n}\\
&= -\frac{C(n)}{nc_0}.
\end{split}
\end{equation*}

Then the metric has the form
\begin{equation}
g'=\sinh^{-2} r(dr^2+\gamma_{std}-\frac{C(n)}{nc_0}r^n \gamma_{std} +O(r^{n+1})).
\end{equation}
Now gluing $(\Omega, g)$ and $(\Omega', g')$ along $(S^{n-1}, \gamma_2)$, we obtain a hyperbolic manifolds $(\Omega'', g'')$ with corners and along the corners we have $H_g\ge H_{g'}$ by our construction.
The mass aspect of $(\Omega'', g'')$ is given by
$$f_m(\Omega'', g'')= Tr_{\gamma_{std}}(-\frac{C(n)}{c_0})\gamma_{std}<0.$$
Then by the proof of Theorem 1.1 in \cite{BQ}, we may smooth the metric along $S^{n-1}$ to get a new AH metric $g$ with the scalar curvature $R_g\geq -n(n-1)$ and the mass aspect is negative  which is a contradiction to  Theorem 1.3 of \cite{ACG}. This completes the proof of Theorem  \ref{highdimnoncob}.
\end{proof}

\begin{tikzpicture}
\draw[dashed] (2,0) arc (0:180:2cm and 0.5cm);
\draw (2,0) arc (0:-180:2cm and 0.5cm);
\node[right] at (2,0) {$(S^{n-1}, \gamma_1, H_1)$};
\draw[dashed] (3,3) arc (0:180:3cm and 0.75cm);
\draw (3,3) arc (0:-180:3cm and 0.75cm);
\node[right] at (3,3) {$(S^{n-1}, \gamma_2, H_2)$};
\coordinate (A) at (-3,3);
\coordinate (B) at (3,3);
\coordinate (C) at (-2,0);
\coordinate (D) at (2,0);
\coordinate (E) at (-2.2,0);
\coordinate (F) at (-2.3,0.3);
\draw (C)--(A);
\draw (D)--(B);
\draw[->] (E)--node[left]{normal vector $\nu$}(F);
\draw (-2,0) parabola bend (0,-3) (2,0);
\node[above] at (0,1) {$(\Omega, g)$};
\draw[domain=-4.5:-3] plot(\x,{sqrt(3*(\x)^2-18)});
\draw[domain=3:4.5] plot(\x,{sqrt(3*(\x)^2-18)});
\node[below] at (0,5) {$(\Omega', g')$};
\node[below] at (0,-3.5) {\textbf{Figure 5}};
\node[below] at (0,-4) { $(\Omega', g')$ is asymptotic hyperbolic,};
\node[below] at (0,-4.5)  {$(\Omega, g)$ is the cobordism gluing with a part of hyperbolic space as in Figure 4.};
\end{tikzpicture}

\begin{theorem}\label{1}\quad
Suppose $3\leq n \leq 7$, given Bartnik data $(\Sigma_1^{n-1}, \gamma_1, 0)$ and $(\Sigma_2^{n-1}, \gamma_2, H_2)$ with $H_2>0$, and $(\Sigma_2^{n-1}, \gamma_2)$ is the boundary of some $n$-dimensional compact manifold $\Omega_2^n$, then there exists some $\Lambda(n,\gamma_2)>0$, such that for any $H_2\ge \Lambda$, $(\Sigma_1^{n-1}, \gamma_1, 0)$ and $(\Sigma_2^{n-1}, \gamma_2, H_2)$ admit no NNSC-cobordism.
\end{theorem}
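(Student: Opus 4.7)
My plan is to reduce the nonexistence of the NNSC-cobordism to a nonexistence statement for NNSC fill-ins with large boundary mean curvature, exploiting the minimality $H_1=0$ to double the cobordism across $\Sigma_1$. Arguing by contradiction, we suppose an NNSC-cobordism $(\Omega,g)$ of $(\Sigma_1,\gamma_1,0)$ and $(\Sigma_2,\gamma_2,H_2)$ exists, with $H_2$ to be chosen large. The first step is to form the double $\hat\Omega=\Omega\cup_{\Sigma_1}\Omega$. Since the two sides of this doubled metric have matching (vanishing) mean curvature along $\Sigma_1$, Corollary \ref{Smoothing2} smooths the metric across $\Sigma_1$ into a $C^2$ metric with uniformly bounded scalar curvature, and Lemma \ref{CT} together with the two-step conformal procedure of Section 2 then restores $R\geq 0$. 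The outcome is a smooth NNSC manifold $\hat\Omega$ whose boundary consists of two copies of $\Sigma_2$ with induced metric $\gamma_2$ and mean curvature $\tilde H_2$ that can be made as close to $H_2$ as we please by shrinking the smoothing parameter $\delta$.

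Next, fix any smooth metric $g_2$ on $\Omega_2$ extending $\gamma_2$ on $\partial\Omega_2=\Sigma_2$, and let $H'$ denote the mean curvature of $\partial\Omega_2$ in $(\Omega_2,g_2)$. Gluing $(\Omega_2,g_2)$ to one of the $\Sigma_2$ boundary components of $\hat\Omega$ produces $\tilde M=\hat\Omega\cup_{\Sigma_2}\Omega_2$, whose boundary is the remaining copy of $\Sigma_2$. For $H_2$ large (exceeding $\sup|H'|+1$), the mean curvature inequality at the corner along $\Sigma_2$ required by Proposition \ref{Smoothing1} holds, so this corner may also be smoothed; the resulting metric has $R\geq 0$ on the $\hat\Omega$-piece and $R\geq -C$ on the $\Omega_2$-piece for some $C$ depending on the chosen $g_2$. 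The conformal deformation \eqref{conformal3} from the Remark after Lemma \ref{CT}, which is designed precisely to handle scalar curvatures bounded below, followed by Proposition \ref{DBH2}, then yields a smooth metric $\check g$ on $\tilde M$ with $R_{\check g}\geq 0$ and boundary mean curvature $\check H_2$ satisfying $\check H_2\geq H_2 - o(1)$ as $\delta\to 0$. Thus $(\tilde M,\check g)$ is an NNSC fill-in of $(\Sigma_2,\gamma_2,\check H_2)$.

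The contradiction comes from the nonexistence of NNSC fill-ins for Bartnik data with large boundary mean curvature: by the techniques of \cite{SWW} and \cite{M2}, one obtains a constant $\Lambda_0(n,\gamma_2)>0$, depending only on $n$ and $\gamma_2$, such that no NNSC fill-in of $(\Sigma_2,\gamma_2,H)$ exists for $H\geq\Lambda_0$, irrespective of the topology of the fill-in. Since $(\tilde M,\check g)$ is such an NNSC fill-in of $(\Sigma_2,\gamma_2,\check H_2)$, choosing $\Lambda$ slightly larger than $\Lambda_0$ to absorb the bounded deformation loss gives $\check H_2\geq\Lambda_0$ whenever $H_2\geq\Lambda$, the desired contradiction. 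The resulting $\Lambda$ depends only on $(n,\gamma_2)$ because the nonexistence-of-fill-in ingredient is inherently topologically uniform in the fill-in, matching the statement of the theorem and the remark about $\Lambda$ being independent of $(\Sigma_1,\gamma_1)$.

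The hardest part will be to quantitatively track the boundary mean curvature through the two corner-smoothings and the subsequent conformal deformations so that $\check H_2$ genuinely grows with $H_2$. Each deformation changes $H$ by an amount tending to $0$ with $\delta$, so the mechanism is in place, but carefully bookkeeping the constants — in particular ensuring that the bound $C$ on the $\Omega_2$-side scalar curvature and the mean curvature loss through the deformation steps can be absorbed into a constant depending only on $(n,\gamma_2)$ — is the main technical burden of the argument.
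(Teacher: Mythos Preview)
Your overall strategy---double across the minimal face $\Sigma_1$, cap one of the resulting $\Sigma_2$ boundary components, and then invoke the nonexistence of NNSC fill-ins with large mean curvature from \cite{M2}---is a reasonable route, and in spirit it is close to what the paper does. The paper's proof runs in the reverse order: it first uses \cite{SWW} to put a PSC metric on $\Omega_2\#T^n$ with boundary metric $\gamma_2$, glues that to the $\Sigma_2$ side of the cobordism, smooths and conformally deforms to obtain an NNSC manifold with minimal boundary $\Sigma_1$, and only then doubles across $\Sigma_1$. The result is a closed manifold of the form $T^n\# K\# T^n$ carrying a PSC metric, contradicting Schoen--Yau \cite{SchY2}. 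In other words, the paper essentially re-derives the fill-in obstruction of \cite{M2} in situ, whereas you quote it as a black box.

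There is, however, a genuine gap in your capping step. You take an \emph{arbitrary} smooth metric $g_2$ on $\Omega_2$ extending $\gamma_2$, so $R_{g_2}\ge -C$ for some $C>0$ determined by $g_2$, and then claim that the conformal deformation \eqref{conformal3}, followed by Proposition~\ref{DBH2}, yields $R_{\check g}\ge 0$. That is not what those tools accomplish. The Remark containing \eqref{conformal3} states only that one obtains $R\ge -c(n)C$; it does not push the scalar curvature up to zero. Proposition~\ref{DBH2} likewise preserves a given lower bound $S$ on the scalar curvature while decreasing the boundary mean curvature; it does not raise the lower bound. So with an arbitrary $g_2$ you end up with a fill-in of $(\Sigma_2,\gamma_2,\check H_2)$ having $R\ge -c(n)C$, not an NNSC fill-in, and the appeal to \cite{M2} is unjustified. (There is in fact a prior problem: the solvability of \eqref{conformal2} in Lemma~\ref{CD2} requires the negative part of the scalar curvature to be small in $L^{n/2}$, which fails on the $\Omega_2$ piece for a generic $g_2$.)

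The fix is to choose $g_2$ with positive scalar curvature; this is exactly the ingredient from \cite{SWW} used by the paper, which guarantees a PSC metric on $\Omega_2$ (or on $\Omega_2\#T^n$) inducing $\gamma_2$ on the boundary. With that choice both pieces have $R\ge 0$, the standard smoothing plus twice-conformal-deformation machinery of Section~2.2 applies, the boundary mean curvature is preserved up to $o(1)$ as you say, and your reduction to \cite{M2} becomes valid.
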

\begin{proof}\quad
Take an interior point $p$ in $\Omega_2^n$, and we can construct a connected sum of $\Omega_2^n$ with an n-dimensional torus $T^n$ around $p$. Denote the resulting manifold by $\bar \Omega_2^n=\Omega_2^n\# T^n$, then $\p\bar \Omega_2^n=\p \Omega_2^n$. By the work in \cite{SWW}, we know that there is a metric $g_2$ on $\bar \Omega_2^n$ such that $R_{g_2}>0$ and $g_2|_{\Sigma_2^{n-1}}=\gamma_2$. Let $\bar H_2$ be the mean curvature of
$(\Sigma_2^{n-1}, \gamma_2)$ in $\bar \Omega_2^n$ with respect to the unit outward normal vector.

Now take $\Lambda=\max_{\Sigma_2^{n-1}}|\bar H_2|$, suppose for any $H_2\ge \Lambda$, $(\Sigma_1^{n-1}, \gamma_1, 0)$ and $(\Sigma_2^{n-1}, \gamma_2, H_2)$ admit an NNSC cobordism $(\Omega_1^n, g_1)$. Glue $(\Omega_1^n, g_1)$ and $(\bar \Omega_2^n, g_2)$ along $(\Sigma_2^{n-1}, \gamma_2)$ to form a new manifold with corners. Note here $H_2+\bar H_2\ge 0$, then by Proposition \ref{Smoothing1}, the twice conformal deformations given in section 2 and Proposition \ref{DBH2}, we can obtain a smooth manifold $(\Omega^n, g)$, such that $R_g\ge 0$ and $R_g>0$ at some point, $g|_{\Sigma_1^{n-1}}=\gamma_1$ and $H_g=0$.

Then consider the closed manifold $(\tilde \Omega^n, \tilde g)$ obtained by doubling $(\Omega, g)$ along the the minimal boundary $(\Sigma_1^{n-1}, \gamma_1)$, similar to the proof of Theorem 3 in \cite{M2}(also see \cite{LM} and \cite{ST3}), $\tilde \Omega^n$ admit a metric  $\tilde g'$ with positive scalar curvature. However, $\tilde \Omega^n=T^n\#K\# T^n$, where $K$ is an $n$-dimensional closed orientable manifold obtained by the doubling of $\Omega_2^n\cup \Omega_1^n$, then $\tilde \Omega^n$ admits no metric with positive scalar curvature by Corollary 2 in \cite{SchY2}, which is a contradiction. Thus completes the proof of Theorem \ref{1}.
\end{proof}

\Acknowledgements{This work was  supported by National Natural Science Foundation of China (National Key R$\&$D Program of China, Grant NO.11731001) and Postdoctoral Science Foundation of China (Grant No. 2020M680171).  }


\begin{thebibliography}{50}
\bahao\baselineskip 11.5pt

\bibitem{ACG} Andersson L., Cai M., and Galloway G.,  Rigidity and Positivity of Mass for Asymptotically Hyperbolic Manifolds,
Ann. Henri Poincar\text{\'{e}} 9 (2008), 1-33.

\bibitem{BMN} Brendle S., Marques F. C., Neves A.,  Deformations of the hemisphere that increase scalar curvature, Invent. Math. 185 (2011), no. 1, 175-197.

\bibitem{BQ} Bonini V. and Qing J.,  A Positive Mass Theorem on Asymptotically Hyperbolic Manifolds with Corners along a Hypersurface, Ann. Henri Poincar\text{\'{e}} 9 (2008), 347-372.

\bibitem{CM} Cabrera Pacheco A. J., Miao P.,  Higher dimensional black hole initial data
with prescribed boundary metric, Math. Res. Lett. 25 (2018), no. 3, 937-956.


\bibitem{Gromov2} Gromov M.,  Scalar curvature of manifolds with boundaries: natural questions and artificial constructions, arXiv:1811.04311v2.

\bibitem{Gromov4} Gromov M.,  Four lectures on scalar curvature, arXiv:1908.10612v3.



\bibitem{GL} Gromov M. and Lawson H.B.,  The Classification of Simply Connected Manifolds of Positive Scalar Curvature,
Ann.Math. , 111(1980), No. 3, 423-434.

\bibitem{GT} Gillbarg D. and Trudinger N.,  Elliptic partial differential equations of second oder, Berlin, Heidelberg, New York: Spinger, 2001.


\bibitem{HS}Hu X., Shi, Y.G.,  NNSC-cobordism of Bartnik data in high dimensions.,SIGMA Symmetry Integrability Geom. Methods Appl. 16 (2020), Paper No. 030, 5 pp.



\bibitem{IH} Ilmanen G. and Huisken T.,  The Inverse Mean Curvature Flow and the Riemannian Penrose Inequality,
J. Differential Geom. 59 (2001), 79-125.

\bibitem{JMT} Jauregui J.L., Miao P., Tam L.-F., Extensions and fill-ins with non-negative scalar curvature, Classical
Quantum Gravity 30 (2013), 195007, 12 pages.

\bibitem{MSch} Mantoulidis C.and Schoen R.,  On the Bartnik mass of apparent horizons, Class. Quantum Grav. 32 (2015) 205002.
\bibitem{LM}  C. Li and C. Mantoulidis,  Positive scalar curvature and skeleton singularities, Math. Ann. 374
(2019), 99-131.

\bibitem{M} Miao P.,  Positive Mass Theorem on Manifolds admitting Corners along a Hypersurface,
Adv. Theor. Math. Phys. 6 (2002) 1163-1182.

\bibitem{M2} Miao P.,  Nonexistence of NNSC fill-ins with large mean curvature,
 arXiv:2009.04976
\bibitem{Ma} Marques F.,  Deforming three manifolds with positive scalar curvature,
Ann.Math. ,(2) 176 (2012), 815-863.

\bibitem{WY} Wang M.T and Yau S.T., A generalization of Liu-Yau quasi-local mass, Commun. Anal. Geom.
Volume 15, Number 2, 249-282, 2007.

\bibitem{SchY} Schoen R., Yau S.T., On the proof of the positive mass
conjecture in general relativity. Comm. Math. Phys., 65(1):45-76, 1979.

\bibitem{SchY2} R. Schoen and S.-T. Yau,  On the structure of manifolds with positive scalar curvature, Manuscripta Math. 28 (1979), 159-183.

\bibitem{SWW} Shi Y., Wang W., Wei G.,  Total mean curvature of the boundary
and nonnegative scalar curvature fill-in,  Arxiv:2007.06756v2.

\bibitem{SWWZ} Shi Y., Wang W., Wei G., and Zhu J.,  On the Fill-in of Nonnegative Scalar Curvature Metrics,  Math. Ann. (2020), https://doi.org/10.1007/s00208-020-02087-1.

\bibitem{SWY} Shi Y., Wang W., Yu H.,  On the rigidity of Riemannian-Penrose inequality for asymptotically flat 3-manifolds with corners, Math. Z. (2019) 291:569-589.

\bibitem{ST}  Shi Y., Tam L.-F.,  Positive mass theorem and the boundary behaviors of
compact manifolds with nonnegative scalar curvature, J. Differential Geom.
62 (2002), 79-125.

\bibitem{ST2}  Shi Y., Tam L.-F.,  Rigidity of compact manifolds and positivity of
quasi-local mass,  Class. Quantum Grav. 24 (2007) 2357-2366.

\bibitem{ST3} Y. Shi and L.-F. Tam,  Scalar curvature and singular metrics, Pacific J. Math. 293 (2018), no.
2, 427-470.
\bibitem{Wa2} Walsh M.,  Aspects of positive scalar curvature and topology I, Irish Math. Soc. Bulletin, no. 80, Winter 2017, 45-68.

\bibitem{Wa3} Walsh M.,  Aspects of positive scalar curvature and topology II, Irish Math. Soc. Bulletin, no. 81, Summer 2018, 57-95.

\end{thebibliography}
\end{document}